\pdfoutput=1

\documentclass[12pt]{article}
\usepackage{etex}

\usepackage[top=1.2in, bottom=1.2in, left=1in, right=1in]{geometry}

\usepackage{amsmath}
\usepackage{amssymb}
\usepackage{amsthm}
\usepackage{array}
\usepackage{bbm}
\usepackage{bm}
\usepackage{cancel}
\usepackage{cmap}
\usepackage{csquotes}
\usepackage{enumerate}
\usepackage{enumitem}%
\usepackage{esint}%
\usepackage{fancyhdr}
\usepackage{listings}
\lstset{
	basicstyle=\small\ttfamily,
	keywordstyle=\color{blue},
	language=python,
	xleftmargin=16pt,
}
\usepackage{longtable}
\usepackage{makeidx}
\usepackage{mathdots}%
\usepackage{mathtools}
\usepackage{mathrsfs}
\usepackage{stackrel}
\usepackage{stmaryrd}%
\usepackage{tabularx}
\usepackage{ctable}
\usepackage{titlesec}
\usepackage{titletoc}
\usepackage{url}
\usepackage{verbatim}
\usepackage{wasysym}
\usepackage{wrapfig}
\usepackage{yhmath}
\usepackage{authblk}
\usepackage[T1]{fontenc}
\usepackage{lmodern}

\usepackage{bookmark,hyperref}

\usepackage[backend=biber,style=alphabetic,maxbibnames=99]{biblatex}

\usepackage[refpage]{nomencl}

\makenomenclature

\makeindex

\setcounter{tocdepth}{2}

\usepackage{thmtools,thm-restate}

\newtheoremstyle{norm}
{12pt}
{12pt}
{}
{}
{\bf}
{:}
{.5em}
{}

\newtheorem{thm}{Theorem}[section]
\newtheorem*{thm*}{Theorem}

\newtheorem*{clm*}{Claim}
\newtheorem{conj}[thm]{Conjecture}
\newtheorem*{conj*}{Conjecture}
\newtheorem{cor}[thm]{Corollary}
\newtheorem{lem}[thm]{Lemma}
\newtheorem*{lem*}{Lemma}

\theoremstyle{norm}
\newtheorem*{prb*}{Problem}

\newtheorem*{ax*}{Axiom}

\newtheorem*{df*}{Definition}

\newtheorem*{ex*}{Example}

\newtheorem*{pos*}{Postulate}

\newtheorem*{pr*}{Proposition}

\newtheorem*{qu*}{Question}

\newtheorem*{rem*}{Remark}

\newcommand{\E}[0]{\mathbb{E}}

\newcommand{\cM}[0]{\mathcal{M}}

\newcommand{\N}[0]{\mathbb{N}}

\newcommand{\be}[0]{\beta}
\newcommand{\ga}[0]{\gamma}
\newcommand{\Ga}[0]{\Gamma}
\newcommand{\de}[0]{\delta}
\newcommand{\De}[0]{\Delta}
\newcommand{\ep}[0]{\varepsilon}

\newcommand{\rh}[0]{\rho}

\newcommand{\om}[0]{\omega}
\newcommand{\Om}[0]{\Omega}
\newcommand{\si}[0]{\sigma}

\newcommand{\subeq}[0]{\subseteq}

\newcommand{\bs}[0]{\backslash}
\newcommand{\iy}[0]{\infty}

\newcommand{\rc}[1]{\frac{1}{#1}}

\newcommand{\fc}[2]{\frac{#1}{#2}}

\newcommand{\pa}[1]{\left( {#1} \right)}

\newcommand{\set}[2]{\left\{{#1}:{#2}\right\}}

\newcommand{\pull}[9]{
#1\ar@/_/[ddr]_{#2} \ar@{.>}[rd]^{#3} \ar@/^/[rrd]^{#4} & &\\
& #5\ar[r]^{#6}\ar[d]^{#8} &#7\ar[d]^{#9} \\}

\newcommand{\cmp}[9]{
\xymatrix{
#1 \ar[r]^{#4}{#5} \ar@/_2pc/[rr]^{#8}_{#9} & #2 \ar[r]^{#6}_{#7} & #3
}
}

\newcommand{\ha}[1]{\ar@{^(->}[#1]}
\newcommand{\ls}[1]{\ar@{-}[#1]}
\newcommand{\sj}[1]{\ar@{->>}[#1]}
\newcommand{\aq}[1]{\ar@{=}[#1]}
\newcommand{\acir}[1]{\ar@{}[#1]|-{\textstyle{\circlearrowright}}}
\newcommand{\acil}[1]{\ar@{}[#1]|-{\textstyle{\circlearrowleft}}}
\newcommand{\ard}[1]{\ar@{.>}[#1]}
\newcommand{\mt}[1]{\ar@{|->}[#1]}
\newcommand{\inm}[1]{\ar@{}[#1]|-{\in}}
\newcommand{\inr}{\ar@{}[d]|-{\rotatebox[origin=c]{-90}{$\in$}}}
\newcommand{\inl}{\ar@{}[u]|-{\rotatebox[origin=c]{90}{$\in$}}}

\newcommand{\beq}[1]{\begin{equation}\llabel{#1}}
\newcommand{\eeq}[0]{\end{equation}}
\newcommand{\bal}[0]{\begin{align*}}
\newcommand{\eal}[0]{\end{align*}}%
\newcommand{\ban}[0]{\begin{align}}
\newcommand{\ean}[0]{\end{align}}

\newcommand{\fixme}[1]{{\color{red}#1}}
\newcommand{\llabel}[1]{\label{#1}\text{\fixme{\tiny#1}}}

\newcommand{\arxiv}[1]{\url{http://www.arxiv.org/abs/#1}}

\allowdisplaybreaks[2]

\DeclareFontFamily{U}{wncy}{}
    \DeclareFontShape{U}{wncy}{m}{n}{<->wncyr10}{}
    \DeclareSymbolFont{mcy}{U}{wncy}{m}{n}
    \DeclareMathSymbol{\Sh}{\mathord}{mcy}{"58} 

\newcommand{\dc}{d_{\mathrm{C}}}
\newcommand{\dtv}{d_{\mathrm{TV}}}
\newcommand{\tmix}{t_{\mathrm{mix}}}

\renewcommand{\subset}{\subseteq}

\addbibresource{bib.bib}

\title{Sampling List Packings}
\author[1]{Evan Camrud}
\author[2]{Ewan Davies\thanks{Supported in part by NSF grant CCF-2309707.}}
\author[2]{Alex Karduna}
\author[3]{Holden Lee}

\affil[1]{Department of Mathematics, Colorado State University}
\affil[1]{{\texttt{Evan.Camrud@colostate.edu}}}

\affil[2]{Department of Computer Science, Colorado State University}
\affil[2]{{\texttt{\{Ewan.Davies,Alex.Karduna\}@colostate.edu}}}

\affil[3]{Department of Applied Mathematics and Statistics, Johns Hopkins University}
\affil[3]{{\texttt{hlee283@jhu.edu}}}
    
\date{February 5, 2024}

\begin{document}

\maketitle

\begin{abstract}
    We study the problem of approximately counting the number of list packings of a graph. 
    The analogous problem for usual vertex coloring and list coloring has attracted a lot of attention. For list packing the setup is similar but we seek a full decomposition of the lists of colors into pairwise-disjoint proper list colorings. In particular, the existence of a list packing implies the existence of a list coloring. Recent works on list packing have focused on existence or extremal results of on the number of list packings, but here we turn to the algorithmic aspects of counting. 

    In graphs of maximum degree $\Delta$ and when the number of colors is at least $\Omega(\Delta^2)$, we give an FPRAS based on rapid mixing of a natural Markov chain (the Glauber dynamics) which we analyze with the path coupling technique. 
    Some motivation for our work is the investigation of an atypical spin system, one where the number of spins for each vertex is much larger than the graph degree.
\end{abstract}

\section{Introduction}

Recall that the classic graph coloring problem is to determine, for a graph $G=(V,E)$ and number of colors $q$, whether there is a coloring $f:V\to[q]$ such that $f$ is proper in the sense that $f(u)\ne f(v)$ for every edge $uv\in E$. 
List coloring emerged in the late 20th century as an adversarial version of this problem~\cite{Viz76,ERT80}.
To define list coloring, we take a list size $q\in\mathbb{N}$ and assign to each $u\in V$ a list of allowed colors $L(u)$ of size $q$. One can think of the list assignment $L$ as supplied by an adversary. 
If, under any choices of such an assignment of lists, the graph $G$ admits a proper coloring $f$ such that $f(u)\in L(u)$ for every vertex $u$, then we say that $G$ is $q$-list colorable (also known as $q$-choosable in some works).
In some ways list coloring and classical graph coloring are similar, e.g.,\ the complete graph on $n$ vertices requires $q\ge n$ for both problems. 
But all bipartite graphs can be colored with two colors, while $K_{n,n}$ requires lists of length $\Om(\log n)$ in the list coloring setting~\cite{ERT80}.

List coloring also arises naturally in various other ways. Notably, if we take a classic graph coloring problem and pre-color some vertices then we can express the problem of completing the coloring through list coloring. 
We set $L(u)$ to be the subset of the colors not used by any pre-colored vertex in the neighborhood of $u$ (though these lists may not have equal sizes).
The idea of list coloring has been extended in several interesting directions, and in this work we study a very recent and structured variant known as list packing~\cite{CCDK24}.
The setup is the same as for list coloring with a fixed list size $q$ but, instead of seeking just one proper coloring where each vertex is colored from its list, we seek $q$ pairwise-disjoint proper colorings from the lists.
We consider two colorings $f$ and $f'$ of a graph disjoint if, for all vertices $u$ we have $f(u)\ne f'(u)$.
We call this collection of $q$ pairwise-disjoint proper list colorings a \emph{list packing}. 
Given a graph $G$ and list size $q$, if a list packing can be found for any lists of size $q$ chosen by an adversary then we say that $G$ is $q$-list packable.

One motivation for list packing in~\cite{CCDK24} is to challenge the state-of-the-art in list coloring. 
For example, a folklore result states that a bipartite graph of maximum degree $\De$ is $q$-list colorable for $q\ge (1+o(1))\De/\log\De$ (see also~\cite{ACK21} and a recent improvement of the leading constant due to Bradshaw~\cite{Bra22}). 
Amongst other foundational results on list packing, the folklore result was matched for the significantly more structured notion of list packing in~\cite{CCDK24}.
A notable difference between known results for list packing and list coloring is that a general graph of maximum degree $\De$ is $q$-list packable for $q\ge 2\De$ (improved to $q\ge 2\De-2$ for $\De\ge 4$ in~\cite{CCDK23}); whilst such $G$ are $q$-list colorable for $q\ge \De+1$ (and even $q=\De$ if $\De\ge 3$ and $G$ does not contain a clique on $\De+1$ vertices~\cite{ERT80}).
Despite the gap in known results, there is scant evidence that more than $\De+1$ colors are ever required for list packing~\cite{CCDK23,CCDK24}. 

Early work on list packing has focused on the problem of existence, though many arguments for existence also provide efficient constructions of list packings. 
An extremal perspective on counting list packings was recently investigated by Kaul and Mudrock~\cite{KM24}, and related problems where one seeks many list colorings or ``flexible'' list colorings are studied in~\cite{KMMP22,CCZ23}.
In this work we turn to the study of approximately counting the number of $L$-packings of a graph $G$ with a fixed $q$-list assignment $L$.
This is a natural question by analogy with the same questions for graph coloring and list coloring: is there an efficient procedure that, given a graph $G$ and a number of colors $q$, approximates the number of proper $q$-colorings of $G$ (e.g.,\ to within a factor 2)?
This question is well-studied in the field of approximate counting and sampling, and is an important test-bed for algorithmic techniques. 
A longstanding open question is the existence of such an approximate counting algorithm that works for all $q\ge \De+1$ on graphs of maximum degree $\De$. 
An influential collection of results using various techniques requires conditions such as $q\ge e\De+1$~\cite{BDPR21}, $q\ge 2\De$~\cite{Jer95,LSS19}, $q\ge 11\De/6$~\cite{Vig99}, and even $q\ge(11/6-\ep)\De$ for some small $\ep>0$~\cite{CDM+19}.
Another branch of research on the algorithmic aspects of counting graph colorings seeks to sample perfectly uniformly from the set of proper $q$-colorings of a graph. 
The pioneering result is due to Huber~\cite{Hub98}, with improvements and new techniques supplied in a number of later works~\cite{LY13,LSS19,BC20a,JSS21}. 
Interestingly, one application of such perfect samplers is to design approximate counting algorithms that can be faster than analogous approaches which use approximate samplers.

Motivated by simple and poweful ideas such as the Markov chain Monte Carlo approach to counting colorings due to Jerrum~\cite{Jer95} (see also~\cite{SS97}), we seek similar results for list packing. 
The list packing problem, however, presents novel difficulties. 
Observe that finding a list coloring gets strictly easier as the list size $q$ grows. 
Supposing that one has a technique that works with lists of size $q_0$, then given larger lists one can take arbitrary subsets of the lists of size $q_0$ and apply the technique in a black-box fashion. 
In contrast, for list packing with larger $q$ we are required to find ever more list colorings which must also be pairwise-disjoint. Given lists of size $q>q_0$, it is not at all clear how to extend an arbitrary collection of $q_0$ pairwise-disjoint list colorings to a full list packing. 
The methods of~\cite{CCDK23,CCDK24} seem to show that list packing does get easier with larger $q$, albeit for less straightforward and general reasons.
We confirm this principle in the setting of approximate counting by giving an algorithm that approximately counts list packings which works for $q=q(\De)$ large enough in graphs of maximum degree $\De$. 

Approximate counting of combinatorial objects such as list colorings, independent sets, and matchings corresponds to approximating the so-called partition function of a spin system from statistical physics. 
We are interested in the study of counting and sampling list packings as it presents an unusual type of spin system. Typically, the number of spins available for each vertex in a spin system is small. 
In the Ising model of magnetism vertices take a spin from $\{+,-\}$, and in the (antiferromagnetic) Potts model associated with proper $q$-colorings, the spins are the $q$ colors. 
Parameter ranges frequently studied for this model on graphs of maximum degree $\De$ include the case when $q$ is close to $\De+1$.
The natural spin system associated with $q$-list packings, however, has $q!$ spins for each vertex. Since a vertex $u$ has a list of $q$ colors which we must decompose into $q$ choices, one for each list coloring in the packing, the spins for $u$ naturally correspond to permutations of the list $L(u)$. One of our contributions is to study a somewhat natural combinatorial problem which involves a spin system on bounded degree graphs with many more spins than commonly-studied examples. 
We hope that further insights into algorithmic techniques for approximate counting can be gained by studying an unusual spin system.

\section{Results}

Our main result is the existence of an approximate counting algorithm for list packings. 
We define an $\ep$-approximation of a real number $x$ as a real number $y$ satisfying $e^{-\ep}\le x/y\le e^\ep$.
We use the standard notion of a \emph{fully polynomial-time randomized approximation scheme} (FPRAS) for a counting problem. 
This is a randomized algorithm that, given $\ep>0$, yields an $\ep$-approximation of the true answer in time polynomial in the input size and $1/\ep$.

Before we state the result, we need some more notation for list packings focusing on a specific list assignment $L$.
Given a graph $G$ we call an assignment $L:V(G)\to2^{\mathbb{N}}$ of lists of colors to the vertices of $G$ such that $|L(u)|=q$ for all vertices $u$ a $q$-list assignment of $G$. 
A proper coloring $f$ of $G$ such that $f(u)\in L(u)$ for all vertices $u$ is called an $L$-coloring, or a list coloring if the lists $L$ are understood from context.
Given a graph $G$ and a $q$-list assignment $L$, we call a collection of $q$ pairwise-disjoint $L$-colorings of $G$ an $L$-packing.

\begin{thm}\label{t:main-count}
    There is an absolute constant $C$ such that the following holds. 
    For any $\Delta\ge 1$ and $q\ge C\Delta^2$, let $G$ be a graph of maximum degree $\De$. Then for any $q$-list assignment $L$ of $G$, there is an FPRAS for the number of $L$-packings of $G$. 
\end{thm}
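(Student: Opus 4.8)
The plan is to analyze the Glauber dynamics on the space of $L$-packings, viewed as a spin system where the spin at each vertex $u$ is a permutation of the list $L(u)$ (equivalently, an injection $[q]\to L(u)$ recording which color that vertex receives in each of the $q$ colorings). A configuration is a valid $L$-packing iff, for every edge $uv$ and every coordinate $i\in[q]$, the $i$-th coloring is proper at $uv$; equivalently, viewing the packing as a $q\times|V|$ array, each row is a proper $L$-coloring. The single-site update at a vertex $u$ resamples the permutation at $u$ uniformly among all permutations of $L(u)$ that keep the packing valid given the current spins on $N(u)$. First I would check that this chain is irreducible and aperiodic on the set of $L$-packings and has the uniform distribution as its stationary measure, so that bounding its mixing time yields an approximate sampler, and hence (by the standard self-reducibility / simulated-annealing reduction for counting, as in Jerrum's framework) an FPRAS once $q \geq C\Delta^2$. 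A preliminary but essential point is to ensure the update distribution at $u$ is never empty: given any fixed proper list colorings on $N(u)$ in each coordinate, I need that the bipartite "coordinate vs.\ color" constraint system at $u$ still admits a system of distinct representatives; this is where a lower bound on $q$ relative to $\Delta$ first enters, via a Hall-type / permanent lower bound argument (each coordinate forbids at most $\Delta$ colors at $u$, but the forbidden sets vary with the coordinate, so a defect-Hall or Bregman–Minc style count keeps the number of valid completions positive and in fact large).

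The core of the argument is a path coupling computation. I would put the Hamming-type metric on packings that counts the number of vertices at which two packings differ (one could refine to also count, at a differing vertex, how much the two permutations differ, but vertex-Hamming should suffice at the cost of a worse constant $C$). For the path-coupling step, take two packings $X, Y$ differing only at a single vertex $w$, pick the same vertex $u$ to update, and couple the two resampled permutations at $u$ optimally. If $u \notin N(w) \cup \{w\}$ the update is identical in both chains and nothing changes; if $u = w$ the distance can only decrease (both become the same permutation with probability $1$ under the identity coupling, since the constraints from $N(w)$ are identical). The only way distance increases is $u \in N(w)$: then the set of forbidden colors at $u$ in each coordinate differs between $X$ and $Y$ only in the (at most one) coordinate $i$ where $w$'s color changed, so the two update distributions at $u$ are close in total variation, and an optimal coupling fails to agree with probability bounded by that total variation distance. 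I would bound this total variation distance by roughly $O(1/q)$ — or more carefully $O(\Delta/q)$ once one accounts for the fact that changing one color in one coordinate perturbs the permanent of the relevant $0/1$ matrix of allowed assignments by a controlled relative amount — using the lower bound on the number of valid completions established above together with a sensitivity estimate for how that count changes when a single entry of the constraint matrix is toggled.

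Putting the step together: summing over the $\le\Delta$ neighbors $u$ of $w$ that can be chosen (each with probability $1/|V|$), the expected increase in distance is at most $\frac{\Delta}{|V|}\cdot O(\Delta/q)$, while choosing $u=w$ (probability $1/|V|$) gives an expected decrease of order $1/|V|$ (with constant probability the two permutations coalesce). So the expected change in distance under one step is at most
\[
\frac{1}{|V|}\left(-c_1 + c_2\,\frac{\Delta^2}{q}\right),
\]
which is strictly negative, uniformly bounded away from $0$ by $-\Omega(1/|V|)$, as soon as $q \ge C\Delta^2$ for $C = c_2/c_1$ suitably large. The path coupling theorem then gives mixing time $O(|V|\log|V|)$, and feeding this approximate sampler into the standard reduction (sample-count via the identity $|\Omega| = \prod_i |\Omega_i|/|\Omega_{i+1}|$ over a chain of relaxed constraint sets, estimating each ratio by Monte Carlo) yields the FPRAS. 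The main obstacle I anticipate is the quantitative control of the single-site update distribution — proving both that it is supported on enough permutations (a permanent lower bound, needing $q \gtrsim \Delta$ at least, and comfortably so when $q \gtrsim \Delta^2$) and that it is Lipschitz-stable under a one-coordinate perturbation of the neighborhood with the right $O(\Delta/q)$-type modulus; getting the exponent of $\Delta$ here correct is exactly what determines whether the threshold is $\Delta^2$ (as claimed) rather than something larger, and it is the step where a clean combinatorial estimate (rather than a crude union bound) is needed.
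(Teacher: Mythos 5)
Your high-level plan is the same as the paper's: Glauber dynamics on permutation-valued spins, ergodicity via Hall's theorem (using the availability bipartite graph which has minimum degree $q-\Delta$), path coupling, and the standard self-reducibility reduction from sampling to counting. But the path-coupling step as written contains a genuine error that breaks the $O(\Delta^2)$ threshold.

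You write that when two packings $X,Y$ differ only at $w$ and we update a neighbor $u\in N(w)$, ``the set of forbidden colors at $u$ in each coordinate differs between $X$ and $Y$ only in the (at most one) coordinate $i$ where $w$'s color changed.'' That is not true. The spin at $w$ is a permutation of $L(w)$, and two distinct permutations of $L(w)$ can disagree in up to $q$ positions, so $w$'s color can change in up to $q$ of the coordinates simultaneously. Consequently the two availability graphs for $u$ can differ in up to $2q$ edges, not $O(1)$, and the total-variation distance between their uniform perfect-matching distributions need not be $O(\Delta/q)$---it can be bounded away from $0$ uniformly in $q$. This is exactly why ``vertex-Hamming should suffice at the cost of a worse constant $C$'' is incorrect: the vertex-Hamming metric has unit weight on every pair differing at one vertex, regardless of how far apart the two permutations there are, so the path-coupling edge estimate $\E[\delta(\sigma,\sigma')]\le\beta\cdot 1$ with $\beta<1$ simply fails when the two permutations at $w$ are far apart.

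The fix---which is the key idea in the paper's proof---is to weight the path-coupling graph by the Cayley distance $\dc(\omega_w,\omega'_w)$ between the differing permutations, so that edges of the coupling graph effectively record a single transposition of a single vertex's spin. Then the two availability graphs for a neighbor $u$ differ in $O(1)$ edges per unit of Cayley distance, and one can prove (the paper's Lemmas~\ref{l:couple-dist} and~\ref{l:couple-unif-pms}) that uniform perfect matchings in two bipartite graphs of minimum degree $q-\Delta$ differing in $O(1)$ edges admit a coupling with expected Cayley distance $O(\Delta/q)$, and hence (by a telescoping argument over transpositions) expected Cayley distance $O(\psi\Delta/q)$ when the spins differ by $\psi$ transpositions. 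Summing over the $\le\Delta$ neighbors and comparing with the guaranteed decrease when $u=w$ then yields contraction for $q=\Omega(\Delta^2)$. Your instinct that the single-site update distribution is Lipschitz in an $O(\Delta/q)$ sense is on the right track, but it must be stated per transposition and tracked via a Cayley-weighted metric; the unweighted Hamming metric destroys this control.
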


We prove this result by analyzing the so-called ``heat-bath Glauber dynamics'' for list packing. We define and analyze this Markov chain later, noting here that we prove Theorem~\ref{t:main-count} by establishing rapid mixing of this chain and hence an approximate sampling algorithm for a uniform $L$-packing under the same conditions. 
Throughout, we assume that $\De$ and $q$ are fixed constants and do not analyze the case where they are allowed to depend on the number $n$ of vertices of $G$. We do not attempt to optimize the dependence of the running time on $q$ or $\De$.

A natural combinatorial problem on matchings in balanced bipartite graphs of large minimum degree emerges during the proof of Theorem~\ref{t:main-count}, leading to a probabilistic result stated below that may be of independent interest. 
Given a set $A$, we write $\mathcal{U}_A$ %
for the uniform distribution on $A$. When a fixed $q$ is clear from context, let $\dc$ be the Cayley metric on the symmetric group $S_q$. That is, $\dc(\rh,\rh')$ is the minimum number of transpositions which one must compose with $\rh$ to turn it into $\rh'$.
This is merely graph distance on the Cayley graph of $S_q$ generated by the transpositions.
We associate perfect matchings in a balanced bipartite graph $H=([q]\sqcup [q], E)$ with permutations $\rh\in S_q$ where $(i,\rh(i))\in E$ for each $i$.  

Given two random variables $X,Y$ defined on the discrete probability spaces $(\Omega_X, p_X)$ and $(\Omega_Y, p_Y)$, a coupling of $X,Y$ is a random variable $\ga=(X',Y')$ defined on a probability space $(\Omega_X\times \Omega_Y, p_\ga)$ such that $X$ and $X'$ have identical distributions and $Y$ and $Y'$ have identical distributions. 

Given a Markov chain $Z_t$ with state space $\Omega$ and transition matrix $P$, a coupling of $Z_t$ is a Markov chain $(X_t,Y_t)$ with state space $\Omega\times\Omega$ and transition matrix $\widehat P$ satisfying
\begin{align*}
    \sum_{y'\in\Omega}\widehat P((x,y),(x',y')) &= P(x,x'),\\
    \sum_{x'\in\Omega}\widehat P((x,y),(x',y')) &= P(y,y').
\end{align*}
That is, each coordinate of the coupling is a faithful copy of $Z_t$, though the transitions of the coordinates are not necessarily independent.

\begin{restatable}{lem}{lcoupledist}\label{l:couple-dist}
    There are constants $C_1$, $C_2$ such that the following hold. Suppose that $q\ge C_1\De$, and
    let $H=(V,E)$ be a bipartite graph in which $|V|=2q$ and the bipartition is balanced. Suppose that $H$ has minimum degree at least $q-\De$. 
    Let $e\in E$ be an edge of $H$, let $\mathcal{L}$ be the set of perfect matchings of $H$ containing $e$, and let $\mathcal{R}$ the set of perfect matchings of $H$ not containing $e$.
    Then there is a coupling $\ga$ of $\mathcal{U}_{\mathcal{R}}$ and $\mathcal{U}_{\mathcal{L}\cup\mathcal{R}}$ such that
    \[
\E_{(\rh,\rh')\sim \ga} [\dc(\rh,\rh')] \le \fc{C_2\De}{q}.
    \]
\end{restatable}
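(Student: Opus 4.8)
The plan is to reduce to coupling two uniform distributions that differ only in whether $e$ is used, and then realize that coupling by a single ``switching'' of the matching together with a small total‑variation correction. Write $e=(a,b)$, identify a perfect matching $M$ of $H$ with the permutation it induces (so $M\in\mathcal{L}$ iff $M(a)=b$), and note that, since $\mathcal{L}$ and $\mathcal{R}$ partition the perfect matchings of $H$, we have $\mathcal{U}_{\mathcal{L}\cup\mathcal{R}}=(1-p)\,\mathcal{U}_{\mathcal{R}}+p\,\mathcal{U}_{\mathcal{L}}$ with $p=|\mathcal{L}|/(|\mathcal{L}|+|\mathcal{R}|)$. Hence it suffices to build a coupling $\ga_0$ of $\mathcal{U}_{\mathcal{L}}$ and $\mathcal{U}_{\mathcal{R}}$: letting $\ga$ act as the identity with probability $1-p$ and as $\ga_0$ with probability $p$ gives a coupling of $\mathcal{U}_{\mathcal{R}}$ and $\mathcal{U}_{\mathcal{L}\cup\mathcal{R}}$ with $\E_\ga[\dc]=p\cdot\E_{\ga_0}[\dc]$. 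I would then establish (i) $p=O(1/q)$ and (ii) $\E_{\ga_0}[\dc]=O(\De)$, so that taking $C_1$ large makes the product at most $C_2\De/q$.

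For (i) and the preparation for (ii) I would use the bipartite ``switching graph'' $\mathcal{G}$ with classes $\mathcal{L}$ and $\mathcal{R}$, where $M\in\mathcal{L}$ is joined to $M'\in\mathcal{R}$ exactly when they differ by one transposition. If $M\in\mathcal{L}$ and we transpose $a$ with another left‑vertex $y$, the result lies in $\mathcal{R}$ and is a valid matching iff $M(y)\in N(a)$ and $y\in N(b)$; the minimum‑degree hypothesis forbids at most $\De$ values of $y$ for each condition, so $M$ has at least $q-2\De-1$ distinct $\mathcal{G}$‑neighbours. Conversely, a given $M'\in\mathcal{R}$ can only arise by transposing $a$ with $y=(M')^{-1}(b)$, and this lands in $\mathcal{L}$ iff $\big((M')^{-1}(b),\,M'(a)\big)\in E$; thus every $M'\in\mathcal{R}$ has $\mathcal{G}$‑degree $0$ or $1$. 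Double counting the edges of $\mathcal{G}$ gives $(q-2\De-1)\,|\mathcal{L}|\le|\mathcal{R}|$, hence (i). Writing $\mathcal{R}_{\mathrm{good}}$ for the matchings of $\mathcal{G}$‑degree $1$ and $\mathcal{R}_{\mathrm{bad}}=\mathcal{R}\setminus\mathcal{R}_{\mathrm{good}}$, a second switching argument controls the bad set: from $M'\in\mathcal{R}_{\mathrm{bad}}$ one transposes $a$ with a suitable left‑vertex $z$ to reach $\mathcal{R}_{\mathrm{good}}$, where again only $O(\De)$ choices of $z$ are excluded, while each element of $\mathcal{R}_{\mathrm{good}}$ is reached at most $\De$ times; this yields $|\mathcal{R}_{\mathrm{bad}}|\le\tfrac{\De}{q-O(\De)}\,|\mathcal{R}|=O(\De/q)\,|\mathcal{R}|$.

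To build $\ga_0$: from $M\sim\mathcal{U}_{\mathcal{L}}$ move to a uniformly random $\mathcal{G}$‑neighbour $M'$. This costs exactly one transposition and produces a distribution $\nu$ supported on $\mathcal{R}_{\mathrm{good}}$ with $\nu(M')=(|\mathcal{L}|\,d_M)^{-1}$, where $d_M\in[q-2\De-1,\,q-1]$ is the $\mathcal{G}$‑degree of the unique neighbour $M$ of $M'$. Using $|\mathcal{R}|=\sum_{M\in\mathcal{L}}d_M+|\mathcal{R}_{\mathrm{bad}}|$ and the bound on $|\mathcal{R}_{\mathrm{bad}}|$, one checks $\nu(M')/\mathcal{U}_{\mathcal{R}}(M')=1\pm O(\De/q)$ on $\mathcal{R}_{\mathrm{good}}$, so that $\dtv(\nu,\mathcal{U}_{\mathcal{R}})=O(\De/q)$ (the missing mass on $\mathcal{R}_{\mathrm{bad}}$ and the multiplicative fluctuation of the $\mathcal{G}$‑degrees each contribute $O(\De/q)$). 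Composing the one‑transposition step with a maximal coupling of $\nu$ and $\mathcal{U}_{\mathcal{R}}$ and using that the Cayley diameter of $S_q$ is $q-1$, the correction adds at most $\dtv(\nu,\mathcal{U}_{\mathcal{R}})\cdot(q-1)=O(\De)$ to the expected distance, so by the triangle inequality $\E_{\ga_0}[\dc]\le 1+O(\De)=O(\De)$. Together with (i) this gives $\E_\ga[\dc]=p\cdot\E_{\ga_0}[\dc]=O(\De/q)$, and the constants become explicit once $C_1$ is chosen so that, e.g., $q-3\De-2\ge q/2$.

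I expect the main obstacle to be the combinatorial switching bookkeeping, above all the bound on $|\mathcal{R}_{\mathrm{bad}}|$: the repair transposition has to be arranged so that it stays inside $\mathcal{R}$, genuinely lands in $\mathcal{R}_{\mathrm{good}}$, and has controlled multiplicity, and one must verify that each denominator of the form $q-O(\De)$ is positive and of order $q$ under the hypothesis $q\ge C_1\De$. Everything else — the mixture decomposition, the triangle inequality across the two stages of $\ga_0$, and the $\dtv$ estimate — is routine once those switching counts are in place.
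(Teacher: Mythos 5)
Your proposal is correct, and it reaches the same $O(\De/q)$ bound through the same high-level plan as the paper---decompose $\mathcal{U}_{\mathcal{L}\cup\mathcal{R}}$ as a mixture of $\mathcal{U}_{\mathcal{R}}$ and $\mathcal{U}_{\mathcal{L}}$, bound $p=|\mathcal{L}|/(|\mathcal{L}|+|\mathcal{R}|)=O(1/q)$, and couple $\mathcal{U}_{\mathcal{L}}$ with $\mathcal{U}_{\mathcal{R}}$ via a switching graph---but the switching step is implemented genuinely differently. The paper joins $\rho\in\mathcal{L}$ to $\rho'\in\mathcal{R}$ when $\rho'=\rho(1\,i\,j)$ is a \emph{3-cycle} composition; this makes every $\rho'\in\mathcal{R}$ have degree at least $q-2\De-4$, so both sides of the auxiliary bipartite graph have large minimum degree and the coupling falls straight out of the min-cut corollary (Corollary~\ref{c:couple}), giving Cayley distance $2$ with probability $1-O(\De/q)$. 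You instead use single \emph{transpositions} at $a$, which gives cost $1$ rather than $2$ per step but forces every $M'\in\mathcal{R}$ to have switching degree $0$ or $1$, so you cannot invoke Corollary~\ref{c:couple} directly: you must introduce $\mathcal{R}_{\mathrm{bad}}$, run a second switching argument to show $|\mathcal{R}_{\mathrm{bad}}|=O(\De/q)|\mathcal{R}|$, and then correct the pushed-forward distribution $\nu$ to $\mathcal{U}_{\mathcal{R}}$ via a maximal coupling paying up to $q-1$ in Cayley distance on a $\dtv=O(\De/q)$ event. Both approaches give $\E[\dc]=O(\De)$ for the nontrivial branch of the mixture, and your switching counts (at least $q-2\De-1$ neighbours from $\mathcal{L}$, degree $\le 1$ from $\mathcal{R}$, the repair step with multiplicity $\le\De$) are all correct under the minimum-degree hypothesis. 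The trade-off is that the paper's choice of 3-cycles buys a cleaner proof via an off-the-shelf coupling lemma, while your choice of transpositions is more elementary in spirit but requires the extra $\mathcal{R}_{\mathrm{bad}}$ bookkeeping and the TV-distance correction.
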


\noindent
This lemma is the main bottleneck for improving the dependence of $q$ on $\De$ in our main theorem. In particular, removing the factor of $\De$ would also improve Theorem~\ref{t:main-count} by a factor of $\De$.

\section{Technical overview}\label{s:overview}

We prove Theorem~\ref{t:main-count} by the standard Markov chain Monte Carlo approach and the path coupling technique of Bubley and Dyer~\cite{BD97}.
We study an ergodic Markov chain---the heat-bath Glauber dynamics---whose stationary distribution is uniform on the list packings of a graph, and use path coupling to show rapid mixing. 
Then a well-known and generic reduction from counting to sampling yields Theorem~\ref{t:main-count}.
Path coupling reduces the potentially challenging task of proving rapid mixing of a Markov chain to designing a coupling on adjacent states according to some graph $\Ga$ on the state space $\Om$. 
We largely follow the notation of~\cite{DG99} and consider
\[\cM:=\{P^t\,p_0\}_{t=0}^\infty,\]
an ergodic Markov chain on state space $\Omega$ with initial distribution $p_0$ and transition operator $P$. We denote the (unique) stationary distribution by $\pi$, and denote by $p_t:=P^t p_0$ the distribution of the state of the chain $\cM$ after $t$ steps.
We use the standard notion of mixing time of Markov chains given by
\[ \tmix(\ep) := \max_{p_0}\min\left\{t\ge 0 : \dtv(p_t,\pi)\le \ep\right\}, \]
where $\dtv$ is total variation distance.

Typically, we write $\om$ and $\om'$ for states of the chain before a transition and $\si:=P\om$, $\si':=P\om'$ for the (random) states after one step of the chain from $\om$ and $\om'$ respectively.

\begin{thm}[Bubley and Dyer~\cite{BD97}, see also~\cite{DG99}]\label{t:pathcoupling}
    Let $\Om$ be the state space of a Markov chain $\cM$ and let $\Ga$ be a weighted, directed graph on vertex set $\Om$ with edge weights in $\N$. 
    Let $\de$ be the quasi-metric\footnote{that is, a function which satisfies the conditions of a metric except symmetry} on $\Om$ given by taking shortest paths in $\Ga$, and suppose that $\de(\om,\om')\le D<\infty$ for all $\om,\om'\in\Om$.
    
    For each $(\om,\om')\in E(\Ga)$, suppose that we have a coupling $\ga$, of the random variables $\si$ (with distribution $P\om$) and $\si'$ (with distribution $P\om'$) for which $\E_{(\si,\si')\sim\ga} [\de(\si,\si')] \le \beta\de(\om,\om')$.
    Then if $\beta<1$, we have $\tmix(\ep) \le \log(D/\ep)/(1-\beta)$.
\end{thm}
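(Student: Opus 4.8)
The plan is to run the classical two–stage path coupling argument: first promote the given couplings on edges of $\Ga$ to a coupling on \emph{every} pair of states, then iterate in time and invoke the coupling inequality. For Step~1, fix any $\om,\om'\in\Om$ and choose a (directed) shortest path $\om=x_0,x_1,\dots,x_\ell=\om'$ in $\Ga$, so that $\de(\om,\om')=\sum_{i=0}^{\ell-1}w(x_i,x_{i+1})$ where $w$ denotes the edge weights. A standard subpath argument (splicing a shorter walk into a shortest walk yields a strictly shorter walk) shows each $(x_i,x_{i+1})$ is itself a shortest path, hence $\de(x_i,x_{i+1})=w(x_i,x_{i+1})$. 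For each $i$ we are given a coupling $\ga_i$ of $Px_i$ and $Px_{i+1}$; since the second marginal of $\ga_i$ and the first marginal of $\ga_{i+1}$ are both $Px_{i+1}$, the gluing lemma for couplings produces a single coupling $(\si_0,\si_1,\dots,\si_\ell)$ whose consecutive pairs are distributed as the $\ga_i$. Applying the quasi-metric triangle inequality in the correct direction, $\de(\si_0,\si_\ell)\le\sum_i\de(\si_i,\si_{i+1})$, and taking expectations,
\[
\E[\de(\si,\si')] \;\le\; \sum_{i=0}^{\ell-1}\E_{\ga_i}[\de(\si_i,\si_{i+1})] \;\le\; \beta\sum_{i=0}^{\ell-1}\de(x_i,x_{i+1}) \;=\; \beta\,\de(\om,\om').
\]
Thus for every $(\om,\om')\in\Om\times\Om$ there is a coupling of $P\om$ and $P\om'$ contracting $\de$ by the factor $\beta$.

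For Step~2, build a Markovian coupling $(\om_t,\om'_t)$ of two copies of $\cM$ which, whenever it occupies state $(a,b)$, takes one step using the pair-coupling from Step~1 for $(a,b)$; this assembly is again legitimate by the gluing lemma, applied once per time step. Conditioning on the state at time $t$ and using Step~1 gives $\E[\de(\om_{t+1},\om'_{t+1})\mid\om_t,\om'_t]\le\beta\,\de(\om_t,\om'_t)$, so by the tower rule $\E[\de(\om_t,\om'_t)]\le\beta^t\,\de(\om_0,\om'_0)\le\beta^tD$. For Step~3, note that $\de$, being a genuine quasi-metric arising from positive integer weights, satisfies $\de(a,b)\ge1$ whenever $a\ne b$, so Markov's inequality yields $\Pr[\om_t\ne\om'_t]\le\E[\de(\om_t,\om'_t)]\le\beta^tD$. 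The coupling inequality then gives $\dtv(p_t^{\om},p_t^{\om'})\le\beta^tD$ for all $\om,\om'$; averaging these couplings over $\om\sim p_0$ and $\om'\sim\pi$ and using $P^t\pi=\pi$ gives $\dtv(p_t,\pi)\le\beta^tD$ for every initial distribution $p_0$. Finally, the elementary estimate $\log(1/\beta)\ge1-\beta$ shows that any $t\ge\log(D/\ep)/(1-\beta)$ makes $\beta^tD\le\ep$, which is exactly the claimed bound on $\tmix(\ep)$.

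I expect the only real subtlety to be the bookkeeping around the gluing lemma together with the non-symmetry of $\de$: one must apply the quasi-metric triangle inequality and the one-step contraction hypothesis consistently, keeping the first-state argument "on the left" throughout both the path concatenation and the time iteration, and one must check that composing couplings along a path (and across successive time steps) genuinely preserves all the required marginals. Everything else — the shortest-path subpath property, Markov's inequality, the coupling inequality, and the bound $\log(1/\beta)\ge1-\beta$ — is routine.
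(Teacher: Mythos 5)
The paper states this theorem with citations to~\cite{BD97,DG99} and gives no proof of its own, so there is no in-paper argument to compare against; your argument is the standard path coupling proof and it is correct. The three-step structure you use (extend the edge couplings to a one-step coupling on every ordered pair via shortest paths and the gluing lemma; iterate the contraction in time via the tower rule; convert expected distance into total-variation decay via Markov's inequality and the coupling inequality, then optimize over $t$ using $\log(1/\beta)\ge 1-\beta$) is exactly the argument in the cited sources, with the asymmetry of the quasi-metric handled correctly by consistently keeping the first state ``on the left.''

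Two small remarks. First, you do not actually need the subpath observation that $\de(x_i,x_{i+1})=w(x_i,x_{i+1})$: the hypothesis is stated in terms of $\de$ on edges, and the weaker fact $\de(x_i,x_{i+1})\le w(x_i,x_{i+1})$ together with $\sum_i w(x_i,x_{i+1})=\de(\om,\om')$ already yields $\sum_i\de(x_i,x_{i+1})\le\de(\om,\om')$, which is all the concatenation step uses. Second, the inequality $\Pr[\om_t\ne\om'_t]\le\E[\de(\om_t,\om'_t)]$ depends on $\de$ being $\ge 1$ off the diagonal; this is precisely where the paper's assumption that the edge weights lie in $\N$ (taken here to mean positive integers, which is how it is used elsewhere in the paper) comes in, and you correctly flag and use this.
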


The theorem may seem rather abstract, so we briefly discuss a well-known application to list coloring as a warm-up to the main argument for list packing. 
Let $G$ be a graph of maximum degree $\De$ and let $L$ be a $q$-list assignment of $G$. Let $\Om$ be the set of $L$-colorings of $G$, and let $\cM$ be the heat-bath Glauber dynamics on $\Om$, defined as follows. 
Note that states $\om\in\Om$ are colorings and hence functions $V(G)\to\N$.
A transition of $\cM$ from a state $\om$ is performed by choosing a vertex $u\in V(G)$ uniformly at random, sampling a color $c$ in $L(u)\setminus \om(N(u))$, and moving to the state $\si$ with $\si(u)=c$ and $\si(v)=\om(v)$ for all $v\ne u$. That is, the transition operator $P$ is defined via
\begin{enumerate}
    \item sampling a vertex $u\sim \mathcal{U}_{V(G)}$,
    \item sampling a color $c\sim\mathcal{U}_{L(u)\setminus\omega(N(u))}$,
    \item defining $\sigma$ by $\sigma(v):=\begin{cases}
c & v=u\\
\omega(v) & v\neq u
\end{cases}$, and moving to the state $\sigma$.
\end{enumerate}
Note that $L(u)\setminus \om(N(u))$ is the set of available colors for $u$. 
These colors are in the list $L(u)$ but are not used by the coloring $\om$ on the neighbors of $u$ so setting $\omega(u)$ to one of them yields a valid list coloring.
This chain is reversible, and when $q\ge\De+2$ it is ergodic~\cite[Exercises 4.1]{Jer03} with stationary distribution uniform on $\Om$.

Let $\Ga$ be the directed graph on $\Om$ where $(\om,\om')$ is an edge if and only if the colorings $\om,\om'$ differ at exactly one vertex and let all edge weights be 1.
From the proof of ergodicity~\cite{Jer03} it follows that we can take $D=(\Delta+1)n$ in Theorem~\ref{t:pathcoupling} and define couplings as follows. 
Let $(\om,\om')\in E(\Ga)$ and suppose that the colorings $\om$ and $\om'$ differ at the vertex $v^*$. 
Sample $u\in V(G)$ uniformly at random and update the color of $u$ in both chains (intuitively, this decision helps the chains coalesce). That is, the distribution $\ga$ of $(\si,\si')$ is defined by the Markov transition $(\om,\om')\overset{P_\gamma}{\mapsto}(\si,\si')$ itself defined by
\begin{enumerate}
\item sampling a vertex $u\sim \mathcal{U}_{V(G)}$,
\item sampling a pair of available colors $(a,b)$ from a distribution $\gamma_c$ such that
\begin{equation}
    \gamma_c:=\underset{\substack{\gamma'\text{ is a coupling of }\\ \mathcal{U}_{L(u)\setminus\omega(N(u))}\text{ and }\mathcal{U}_{L(u)\setminus\omega'(N(u))}}}{\arg\max}\Pr_{(a,b)\sim\gamma'}(a=b)
\end{equation}
\item defining $\si$ and $\si'$ by updating the color of $u$ in each to $a$ and $b$ respectively:
\begin{align*}
    \si(v) &:=\begin{cases}
a & v=u\\
\omega(v) & v\neq u\end{cases},&
\si'(v) &:=\begin{cases}
b & v=u\\
\omega(v) & v\neq u\end{cases},
\end{align*}
and moving to the state $(\si,\si')$.
\end{enumerate}
Observe that $\gamma_c$ is defined as the coupling on the uniform distributions of available colors, $\mathcal{U}_{L(u)\setminus \om(N(u))}$ and $\mathcal{U}_{L(u)\setminus \om'(N(u))}$, which maximizes the probability the colors are the same. The important property of this definition in terms of applying Theorem~\ref{t:pathcoupling} is that this coupling minimizes the expectation of the discrete metric on the sample.

If $u\in N(v^*)$ the sets of available colors for $u$ can be different making $\gamma_c$ nontrivial. In this case, let $C=\om(N(u)\setminus\{v^*\})$ and note that the two sets of available colors are $A=(L(u)\setminus C)\setminus\{\om(v^*)\}$ and $B=(L(u)\setminus C)\setminus\{\om'(v^*)\}$. We wish to couple $\mathcal{U}_A$ and $\mathcal{U}_B$ such that the probability of choosing the same color is maximized, and the best general coupling is not too hard to find. 

\begin{lem}[{See e.g.,~\cite[Lemma 4.10]{Jer03}}]\label{l:couple-uni-elts}
Let $U$ be a finite set and $A,B\subset U$. Then there is a coupling $\ga_c$ of $\mathcal{U}_A$ and $\mathcal{U}_B$ such that
\[\Pr_{(a,b)\sim\ga_c}(a=b) = \frac{|A\cap B|}{\max\{|A|,|B|\}},\]
where $(a,b)$ is a random element of $A\times B$ chosen according to the coupling $\ga_c$.
\end{lem}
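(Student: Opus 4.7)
The plan is to invoke the standard maximal-coupling construction. Recall that for any two probability distributions $\mu,\nu$ on a finite set $U$, there is a coupling $(X,Y)$ with marginals $\mu,\nu$ satisfying
\[
\Pr(X=Y) = \sum_{u\in U}\min\{\mu(u),\nu(u)\},
\]
and this value is the maximum of $\Pr(X=Y)$ over all couplings of $\mu$ and $\nu$. I would apply this fact to $\mu=\mathcal{U}_A$ and $\nu=\mathcal{U}_B$, viewed as distributions on $U$, and the resulting coupling will serve as $\ga_c$.

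To evaluate the sum, I would split on whether $u\in A\cap B$. For $u\in A\cap B$ we have $\mu(u)=1/|A|$ and $\nu(u)=1/|B|$, so $\min\{\mu(u),\nu(u)\} = 1/\max\{|A|,|B|\}$. For $u\notin A\cap B$, one of $\mu(u),\nu(u)$ vanishes, so the minimum is zero. Summing over $u\in U$ yields exactly $|A\cap B|/\max\{|A|,|B|\}$, matching the claim.

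If a self-contained construction is preferred to citing the maximal-coupling lemma, I would instead assume $|A|\ge|B|$ by symmetry and define the coupling in two stages. First, with probability $|A\cap B|/|A|$, sample $c\sim\mathcal{U}_{A\cap B}$ and output $(a,b)=(c,c)$. Second, with the complementary probability $|A\setminus B|/|A|$, sample $a\sim\mathcal{U}_{A\setminus B}$, which forces $a\ne b$ since $a\notin B$, and independently draw $b$ from the unique distribution on $B$ that makes the overall marginal of $b$ equal $\mathcal{U}_B$. Solving for this auxiliary distribution gives probabilities $(|A|-|B|)/(|A\setminus B|\cdot |B|)$ on each $c\in A\cap B$ and $|A|/(|A\setminus B|\cdot |B|)$ on each $c\in B\setminus A$, which are nonnegative precisely because $|A|\ge|B|$. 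A short calculation then verifies the marginals and gives $\Pr(a=b)=|A\cap B|/|A|$. The only edge case is $A\setminus B=\emptyset$, in which $|A|\ge|B|$ forces $A=B$ and the coupling degenerates to the identity. The argument is entirely elementary; there is no significant obstacle beyond correctly handling the $\max$ in the denominator, which is resolved by the symmetry reduction.
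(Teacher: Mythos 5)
Your proof is correct: the maximal-coupling identity $\Pr(X=Y)=\sum_u\min\{\mu(u),\nu(u)\}$ applied to $\mathcal{U}_A$ and $\mathcal{U}_B$ immediately gives $|A\cap B|/\max\{|A|,|B|\}$, and your explicit two-stage construction (with the auxiliary distribution correctly normalized) is a valid elementary substitute. The paper states this lemma without proof, deferring to the cited reference, whose argument is exactly the standard maximal coupling you describe, so your approach is essentially the same.
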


Let $m=\max\{|A|,|B|\}$. 
We have $A \cap B = (L(u)\setminus C)\setminus\{\om(v^*),\om'(v^*)\}$. 
Checking the four cases according to whether each of $\om(v^*)$ and $\om'(v^*)$ are in $L(u)\setminus C$, we observe that 
$|A\cap B|\ge m-1$. By Lemma~\ref{l:couple-uni-elts}, we can ensure that the two chains choose the same color with probability at least $1-1/m$. 
Considering the definitions of $A$ and $B$, we also have $m\ge q-\De$.
Returning to the analysis of the coupling $\gamma$ described above, we have
\[ \E_{(\si,\si')\sim\gamma}[\de(\si,\si')] \le 1 + \frac{1}{n}\left(-1 + \frac{\De}{q-\De}\right). \]
This comes from the facts that $\de(\om,\om')=1$ by assumption, the probability $1/n$ that we successfully reduce the distance by $1$ in an update to $v^*$, and the probability of at most $\De/n$ that we choose to update a neighbor of $v^*$, and in this case fail to choose the same color in the coupling of the color choice in each chain given by Lemma~\ref{l:couple-uni-elts} (which occurs with probability at most $1/(q-\De)$ given that we update a neighbor of $v$).
Set $\be$ to the right-hand side above and solve for $\beta<1$ to obtain $q>2\De$.

Well-known works that first studied this technique~\cite{Jer95,BD97,Jer03} give a similar proof, though Jerrum~\cite{Jer95} manually constructed a coupling of the Markov chain and did not appeal to path coupling. 
With path coupling, it is slightly easier to study a variant of the Markov chain known as the ``Metropolis Glauber dynamics'' where the transition is defined slightly differently, though the same lower bound on $q$ is required in the argument for this chain.

Our argument for list packing follows the same outline as above using the heat-bath dynamics for list packings, but it is much harder to construct the coupling. In particular, we need an analogue of Lemma~\ref{l:couple-uni-elts} for the much more intricate combinatorial setting of list packings. 
We describe this in the next subsection.

\subsection{Coupling perfect matchings}

In the list packing setting, each ``spin'' is a permutation of a list rather than an element of the list. 
The central problem one faces when adapting the above sketch to list packing is the issue of coupling the choice of available permutations in the case that we are updating the spin of $u$ in two copies of the Glauber dynamics which differ at a neighbor $v\in N(u)$. 
It turns out (see e.g.~\cite{CCDK24} and earlier works such as~\cite{Mac21}) that there is an auxiliary bipartite graph in which available permutations for $u$ correspond to perfect matchings. 

Given a graph $G$ with $q$-list assignment $L$, let $\Om$ be the set of $L$-packings of $G$, and let $\om\in\Om$. 
Let $f_1,\dotsc,f_q$ be the $L$-colorings represented by $\om$. 
To be explicit, suppose that for each vertex $v$ of $G$ we write $L(v)$ in ascending order as $c_{1,v},\dotsc,c_{q,v}$. 
Then we identify $V(G)$ with $[n]$ and consider $\Omega\subset S_q^n$ such that with $v\in [n]$ we define $f_1,\dotsc,f_q:V(G)\to\mathbb{N}$ by $f_i(v) = c_{\om_v(i),v}$. 
This notation has the advantage that many of the dependencies are explicit, but to avoid a proliferation of subscripts we omit them where it is possible to fix some context in advance.

For a vertex $u\in V(G)$, we construct the \emph{availability graph} $H_u=H_u(G,L,\om)$ as follows. We consider the vertex set of $H_u$ as $[q]\sqcup[q]$, the disjoint union of two copies of the set $[q]$. The left copy consists of ``packing indices'' and the right copy consists of ``list indices''. For clarity, the edges of $H_u$ are considered oriented from left to right so that $(i,j)$ joins packing index $i$ to list index $j$. 
Suppose that $L(u)=\{c_1,\dotsc,c_q\}$ is supplied in some fixed order. Then in $H_u$ we include each edge $(i,j)\in[q]^2$ such that $c_j$ is an available color for $u$ in the coloring $f_i$. 
That is, $(i,j)$ is an edge of $H_u$ if and only if $c_j\notin f_i(N(u))$. 
One can check the defintions and observe that perfect matchings in $H_u$ correspond to the available permutations $\rho\in S_q$ for $u$ in the sense that setting $\om_u$ to an available $\rho$ yields a valid list packing. 
The heat-bath Glauber dynamics for $L$-packings thus works in much the same way as for $L$-colorings. 
The transition from a state $\om$ is defined by choosing a vertex $u\in V(G)$ uniformly at random,  and then a perfect matching in $H_u(G,L,\om)$ uniformly at random. It is straightforward to check that the chain is reversible and has uniform stationary distribution; we prove that it is ergodic (for large enough $q$) in Lemma~\ref{l:ergodic}.

An important consideration when $G$ has maximum degree $\De$ is that $H_u$ has minimum degree $q-\De$. This follows from the properties of a list packing: at packing index $i$ any color that is not available must be used by $f_i$ on a neighbor of $u$ and there are at most $\De$ such neighbors. Similarly, for a color $c_j$ with color index $j$, any  packing index $i$ in which $c_j$ is not available is explained by $c_j$ being used by $f_i$ on $N(u)$. 
Since the colorings in a packing are pairwise-disjoint, each such index must be due to distinct neighbors of $u$, of which there are at most $\De$. 
It is useful to observe that for any $\omega\in S_q^n$, even one that may not be a proper list packing in the sense that the $q$ list colorings it represents may not be proper, the definition of availability graph still makes sense and the observation on the minimum degree still applies. 
That is, the key property of $q$-list packings on graphs of maximum degree $\De$ that yields the minimum degree bound $q-\De$ is that the list colorings represented are pairwise-disjoint.
This fact is convenient in the proof of Lemma~\ref{l:couple-unif-pms}.

To construct the coupling we consider the weighted, directed graph $\Ga$ on $\Om$ where $(\om,\om')$ is an edge if and only if the list packings $\om$ and $\om'$ differ at exactly one vertex. For the edge $(\om,\om')\in E(\Ga)$, let $v$ be the unique vertex at which the two packings differ, and assign weight $\dc(\om_v,\om'_v)$ to the edge.
As with list coloring, the key computation is how much the expected distance changes for one step of the coupling in the case that we start at $(\om,\om')$ and update a neighbor $u$ of the unique vertex $v$ at which $\om$ and $\om'$ differ.
We prove the following result which plays the role of Lemma~\ref{l:couple-uni-elts} in our proof.

\begin{restatable}{lem}{lcoupleunifpms}\label{l:couple-unif-pms}
    There is a universal constant $C$ such that if $q>C\De^2$ the following holds.

    Let $(\om,\om')\in E(\Ga)$ be an edge of weight $\psi$ in $\Ga$ and let $v$ be the unique vertex at which $\om$ and $\om'$ differ. 
    Let $u\in N(v)$ and consider the availability graphs $H$ and $H'$ for $u$ in the packings $\om$ and $\om'$ respectively. 

    Then there exists a coupling $\ga$ of the uniform distributions on perfect matchings on $H$ and $H'$ which satisfies
     \[
        \E_{(\rh,\rh')\sim \ga} [\dc(\rh,\rh')] \le\fc{\psi}{2\De}.
     \]
\end{restatable}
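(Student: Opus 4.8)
The plan is to reduce this lemma to Lemma~\ref{l:couple-dist} by identifying $H$ and $H'$ as closely related availability graphs. First I would observe that since $\om$ and $\om'$ differ only at $v$ and agree everywhere else, the two colorings $f_i$ (from $\om$) and $f'_i$ (from $\om'$) agree at every vertex except possibly $v$, where each $f_i(v)$ is a distinct color of $L(v)$ and likewise for $f'_i(v)$. Consequently, for the vertex $u \in N(v)$, the availability graphs $H$ and $H'$ on vertex set $[q] \sqcup [q]$ differ only in the edges incident to the packing indices $i$ for which $f_i(v) \ne f'_i(v)$. Since $\om_v$ and $\om'_v$ are permutations with Cayley distance $\psi = \dc(\om_v,\om'_v)$, they differ at some set of packing indices of size at most (roughly) $2\psi$ — more precisely, a single transposition changes two indices, so $\psi$ transpositions affect at most $2\psi$ indices; and the colors $f_i(v)$ for those indices are a permutation among themselves in $H$ versus $H'$. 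Thus $H$ and $H'$ differ by swapping, among a set of $\le 2\psi$ color labels attached to the ``forbidden at $u$ via $v$'' structure, which amounts to $H' $ being obtained from $H$ by relabeling (or adding/removing) at most $\psi$ transpositions' worth of edges at $u$.

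The key step is then to chain together the couplings from Lemma~\ref{l:couple-dist}. I would write $\om_v$ and $\om'_v$ as a product of $\psi$ transpositions interpolating between them, giving a path $\om_v = \tau_0, \tau_1, \dotsc, \tau_\psi = \om'_v$ in the Cayley graph. Each consecutive pair $\tau_{k-1}, \tau_k$ induces a pair of availability graphs $H^{(k-1)}, H^{(k)}$ for $u$ that differ by exactly one transposition of the roles of two packing indices — equivalently, they differ by whether a single edge $e$ (corresponding to the color that moves into/out of availability for $u$) is present. Now Lemma~\ref{l:couple-dist} applies: with $q \ge C_1\De$ (which follows from $q > C\De^2$ for $C$ large), it gives a coupling $\ga_k$ of $\mathcal{U}_{\text{PM}(H^{(k-1)})}$ and $\mathcal{U}_{\text{PM}(H^{(k)})}$ with expected Cayley distance at most $C_2\De/q$. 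Composing these couplings via a standard gluing argument (each shares one marginal with the next) and using the triangle inequality for $\dc$ in expectation yields a coupling $\ga$ of $\mathcal{U}_{\text{PM}(H)}$ and $\mathcal{U}_{\text{PM}(H')}$ with
\[
\E_{(\rh,\rh')\sim\ga}[\dc(\rh,\rh')] \le \sum_{k=1}^{\psi} \frac{C_2\De}{q} = \frac{C_2\De\,\psi}{q}.
\]
Taking $q > C\De^2$ with $C = 2C_2$ (say) makes the right-hand side at most $\psi/(2\De)$, as required. I would also need to confirm at the outset that each $H^{(k)}$ has minimum degree $\ge q - \De$ so that Lemma~\ref{l:couple-dist} genuinely applies; this is exactly the structural observation highlighted before the lemma, namely that the minimum-degree bound only requires the represented colorings to be pairwise disjoint (true for every $\tau_k$, since permuting packing-index labels preserves disjointness), not that they be proper.

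A couple of technical points need care. One must make sure the single-transposition step between $H^{(k-1)}$ and $H^{(k)}$ really corresponds to the ``add/remove one edge $e$'' situation of Lemma~\ref{l:couple-dist}: swapping two packing indices $i, i'$ relabels which colors are forbidden at $u$ through $v$, and one should check this is equivalent to toggling membership of one distinguished edge after an appropriate bijective identification of the vertex sets of the two graphs (a relabeling of the left copy $[q]$), under which perfect matchings and the Cayley metric are preserved. The other point is the precise constant: a transposition path of length $\psi$ suffices by definition of $\dc$, so no loss there; one must only absorb the $C_2$ and a possible small constant factor from the bijective identification into the universal $C$. The main obstacle I anticipate is making the reduction to the single-edge-toggle form of Lemma~\ref{l:couple-dist} fully rigorous — that is, verifying that relabeling packing indices by a transposition changes the availability graph for $u$ in exactly the controlled way Lemma~\ref{l:couple-dist} is set up to handle, including the edge cases where the moved color is already unavailable at $u$ for other reasons (in which case the step changes nothing and contributes $0$ to the sum, only helping).
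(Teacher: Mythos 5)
Your overall strategy is the same as the paper's: decompose $\om_v \to \om'_v$ into a transposition path of length $\psi$, bridge each consecutive pair of (not-necessarily-valid) intermediate packings by coupling perfect matchings in their availability graphs via Lemma~\ref{l:couple-dist}, compose the couplings, and take $q = \Omega(\De^2)$ to make the accumulated error at most $\psi/(2\De)$. You also correctly note that the intermediate packings along the transposition path need not be proper, and that this does not spoil the minimum-degree bound $q-\De$ needed for Lemma~\ref{l:couple-dist}; the paper formalizes this exact point by introducing ``near-valid'' packings and proving the stronger statement by induction on $\psi$.

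However, the crux of the step ``each consecutive pair $H^{(k-1)},H^{(k)}$ differs by whether a single edge is present'' is not correct, and your proposed fix (relabel the left copy of $[q]$) does not repair it. A single transposition $(i\,j)$ applied to $\om_v$ swaps $f_i(v)$ and $f_j(v)$, and each of these two colors (when it lies in $L(u)$) alters the availability at both packing index $i$ and packing index $j$; concretely, with $\om_v$ taken to be the identity and $\tau_\psi = (i\,j)$, the paper shows
\[
E(H)\setminus E(H'') \subset \{(i,j),(j,i)\},\qquad E(H'')\setminus E(H) \subset \{(i,i),(j,j)\},
\]
so up to four edges change. Swapping the left labels $i\leftrightarrow j$ would also swap the availability constraints imposed at $u$ by every \emph{other} neighbor of $u$ in colorings $f_i$ and $f_j$, so the relabeled graph is not a single-edge modification of $H^{(k-1)}$. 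The paper instead chains \emph{up to four} applications of Lemma~\ref{l:couple-dist} per transposition, e.g.\ $H \to H-(i,j) \to H-(i,j)-(j,i) = H''-(i,i)-(j,j) \to H''-(i,i) \to H''$ (at the cost of briefly dropping the minimum degree to $q-\De-1$, handled by adjusting constants), which yields a per-step cost of at most $4C_2\De/q$ rather than $C_2\De/q$. This only changes the constant, so your bound $C_2\De\psi/q$ becomes $4C_2\De\psi/q$ and the conclusion still follows for $q > C\De^2$ with a slightly larger $C$; but the argument as you wrote it is not rigorous as stated because the one-edge-toggle reduction is false.
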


\noindent
Note that in the conclusion any bound strictly better than $\psi/\De$ suffices for an application of Theorem~\ref{t:pathcoupling}, and one might expect to obtain results in the case of exactly $\psi/\De$ as well~\cite{Jer95,BD97,DG99}. 
We do not attempt to optimize the constant $C$ in our argument and hence $\psi/(2\Delta)$ is sufficient.

Lemma~\ref{l:couple-unif-pms} is a simple corollary of Lemma~\ref{l:couple-dist}, and with these results in hand the rest of the argument for Theorem~\ref{t:main-count} is standard.

\subsection{Organization}

In Section~\ref{s:dynamics} we define a Markov chain on list packings, prove ergodicity and establish rapid mixing given our results on coupling perfect matchings.
In Section~\ref{s:couple-matchings} we prove Lemmas~\ref{l:couple-dist} and~\ref{l:couple-unif-pms}.
We conclude with some remarks in Section~\ref{s:rem}.

\section{Glauber dynamics for list packing}\label{s:dynamics}

In this section we fix an $n$-vertex graph $G$ of maximum degree $\De$, a $q$-list assignment $L$ of $G$, and let $\Om$ be the set of $L$-packings of $G$.

We consider heat-bath Glauber dynamics $\cM=\cM(G,L)$ for list packing. Given a state $\om\in\Om$ and a vertex $u\in V(G)$, we say that a permutation $\rh$ in $S_q$ is \emph{available} for $u$ in $\om$ if setting $\om_u$ to $\rh$ yields a valid list packing. 
Transitions of $\cM$ are defined as follows. 
From state $\om\in\Om$, choose vertex $u\in V(G)$ uniformly at random, choose an available permutation $\rh\in S_q$ uniformly at random, and let the new state be $\si$ given by $\si_u=\rh$ and $\si_v=\om_v$ for $v\ne u$. 
It is straightforward to check that this is a reversible Markov chain on $\Om$ with uniform stationary distribution.
The question of ergodicity is less straightforward, though the standard argument for list coloring adapts easily.
We require a simple corollary of Hall's classic result on perfect matchings in bipartite graphs.

\begin{lem}[Corollary of Hall's theorem~\cite{PHal35}]\label{l:hall}
    Let $H$ be a bipartite graph with $q$ vertices on each side and minimum degree $d\ge q/2$. Then $H$ contains a perfect matching.
\end{lem}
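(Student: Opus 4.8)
The plan is to verify Hall's matching condition and then invoke Hall's theorem. Write $H=(X\sqcup Y,E)$ with $|X|=|Y|=q$. Since the two sides have equal size, it suffices to show that $|N(S)|\ge|S|$ for every $S\subseteq X$: Hall's theorem then yields a matching saturating $X$, which is automatically perfect.

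First I would dispose of the small sets. If $1\le|S|\le q/2$, pick any vertex $v\in S$; then $N(S)\supseteq N(v)$, so $|N(S)|\ge\deg(v)\ge d\ge q/2\ge|S|$, and Hall's condition holds for such $S$. Next I would handle the large sets. If $|S|>q/2$, I claim $N(S)=Y$, which gives $|N(S)|=q\ge|S|$ immediately. Indeed, suppose some $y\in Y$ has no neighbor in $S$; then every neighbor of $y$ lies in $X\setminus S$, so $\deg(y)\le|X\setminus S|=q-|S|<q/2\le d$, contradicting the minimum degree hypothesis. Combining the two cases, Hall's condition holds for all $S\subseteq X$, so $H$ has a perfect matching.

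There is no genuine obstacle here; this is a textbook degree-to-Hall argument. The only point worth a moment's care is that the case split at $q/2$ exhausts all nonempty $S$ irrespective of the parity of $q$, and the strict inequality $q-|S|<q/2$ used in the large-set case makes the contradiction with $\deg(y)\ge d$ go through uniformly (when $q$ is odd one has $d\ge\lceil q/2\rceil$, which is more than enough).
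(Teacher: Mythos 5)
Your proof is correct and follows essentially the same two-case degree-to-Hall argument as the paper (the paper splits at $|S|\le d$ versus $|S|>d$ rather than at $q/2$, but since $d\ge q/2$ the reasoning in each case is identical). No further comment is needed.
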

\begin{proof}[Sketch proof]
Let the bipartition be $A\sqcup B$ and consider a subset $X\subset A$. If $1\le |X|\le d$ then $|N(X)|\ge d\ge |X|$. If instead $d+1\le |X|\le q$ then $N(X)$ must be all of $B$ since for each neighborhood of a vertex in $B$ must intersect $X$. The result follows from Hall's theorem.
\end{proof}

\begin{lem}\label{l:ergodic}
    For $q\ge2\Delta+2$ the Markov chain $\cM$ is ergodic.
\end{lem}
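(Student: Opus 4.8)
The goal is to show that the heat-bath Glauber dynamics $\cM$ on $L$-packings is ergodic for $q \ge 2\De + 2$. Ergodicity means irreducibility (any state reaches any other with positive probability) plus aperiodicity; aperiodicity is immediate since from any state there is a positive probability of re-selecting the same permutation at the chosen vertex (the availability graph $H_u$ always has a perfect matching, and typically more than one, but even a single fixed matching gives a self-loop). So the heart of the matter is irreducibility. The plan is to show that from any $L$-packing $\om$ one can reach a fixed ``reference'' packing (or directly any other packing $\om'$) by a sequence of single-vertex moves, each of which has positive transition probability.

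\textbf{Key steps.} First I would reduce irreducibility to the following local statement: whenever we are at a state $\om \in \Om$ and we pick a vertex $u$, the set of available permutations for $u$ is nonempty, so the move is well-defined, and moreover we can always change $\om_u$ to be ``closer'' (in Cayley distance) to a target value $\rho^*$ while staying in $\Om$ — provided we have first cleared enough room. Concretely, fix a target packing $\om'$; I would argue by induction on the number of vertices at which $\om$ and $\om'$ disagree. The crucial sub-claim is: if $\om \ne \om'$, there is a vertex $u$ and an available permutation $\rho$ for $u$ in $\om$ with $\dc(\rho, \om'_u) < \dc(\om_u, \om'_u)$, so that updating $u$ to $\rho$ strictly decreases the disagreement measure $\sum_w \dc(\om_w, \om'_w)$. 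To find such a move, pick any $u$ with $\om_u \ne \om'_u$; consider the availability graph $H_u$ for $\om$ at $u$. By the minimum-degree discussion in the text, $H_u$ has minimum degree at least $q - \De \ge q/2 + 1$, so by Lemma~\ref{l:hall} it has a perfect matching — but we need one that agrees with $\om'_u$ on at least one coordinate where $\om_u$ currently disagrees. The standard trick: take the permutation $\tau$ obtained from $\om_u$ by applying the transposition swapping the images of two indices $i_1, i_2$ chosen so that this moves $\om_u$ one step toward $\om'_u$ in the Cayley metric (such $i_1,i_2$ exist whenever $\om_u \ne \om'_u$); if $\tau$ is available for $u$ in $\om$, we are done. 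If $\tau$ is not available, then one of the two newly-assigned colors is blocked by a neighbor of $u$, and here is where $q \ge 2\De+2$ enters: one shows that there is enough slack to instead route through a perfect matching of $H_u$ that agrees with $\om'_u$ somewhere new, using the Hall-type argument of Lemma~\ref{l:hall} applied to the subgraph of $H_u$ after forcing the desired edge(s).

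\textbf{Main obstacle.} The delicate point is the sub-claim that a single favorable move always exists: naively swapping two images of $\om_u$ may produce an unavailable permutation, and one must instead exhibit an available permutation strictly closer to $\om'_u$. I expect the cleanest route is to work with the availability graph directly: given the edge $(i, \om'_u(i))$ for some disagreement index $i$, check via Lemma~\ref{l:hall} (or Hall's theorem applied to $H_u$ with vertices $i$ and $\om'_u(i)$ and their incident non-matching structure deleted) that $H_u$ contains a perfect matching $\rho$ using that edge; one verifies $\dc(\rho, \om'_u) \le \dc(\om_u, \om'_u) - 1 + (\text{correction})$ is strict, or more robustly, iterate so that the total disagreement $\sum_w \dc(\om_w,\om'_w)$ strictly drops. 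Here $q - \De \ge q/2$ is exactly the threshold Lemma~\ref{l:hall} needs even after deleting one forced vertex pair (which only raises the relative minimum-degree requirement slightly, absorbed by the ``$+2$''). The bound $D \le (\De+1)n$ on the diameter of $\Ga$ used later in Theorem~\ref{t:pathcoupling} should fall out of counting the number of such favorable moves: at most $q-1 \le$ (something like $\De$, after optimizing) per vertex, over $n$ vertices — but for ergodicity alone we only need finiteness of the number of moves, which is clear.
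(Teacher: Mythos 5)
Your argument for irreducibility is a genuinely different strategy from the paper's, and unfortunately it has a fatal gap. You try to set up a strict monovariant: from any state $\om$ with $\om\ne\om'$, find a vertex $u$ and an available permutation $\rho$ for $u$ with $\dc(\rho,\om'_u)<\dc(\om_u,\om'_u)$, so that a single Glauber step strictly decreases $\sum_w\dc(\om_w,\om'_w)$. This sub-claim is false even when $q\ge 2\De+2$. Take $G$ a single edge $v_1v_2$ (so $\De=1$), $q=4$, $L(v_1)=L(v_2)=\{1,2,3,4\}$, with $\om_{v_1}=\id$, $\om_{v_2}=(1\,2)(3\,4)$, $\om'_{v_1}=(1\,2)$, $\om'_{v_2}=(3\,4)$. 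Both $\om$ and $\om'$ are valid $L$-packings and $\dc(\om_{v_1},\om'_{v_1})=\dc(\om_{v_2},\om'_{v_2})=1$, so a strictly closer available permutation at $v_1$ would have to be $(1\,2)$ itself, which is blocked since $(1\,2)(1)=2=\om_{v_2}(1)$; symmetrically $(3\,4)$ is blocked at $v_2$ since $(3\,4)(1)=1=\om_{v_1}(1)$. No single update makes progress on your potential function: the chain is at a ``local minimum'' for it, even though the state space is connected.

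This is precisely the obstruction the paper's proof is designed to sidestep. Instead of a local-decrease argument, it fixes an ordering of the vertices and processes them one at a time, \emph{repacking} later-indexed neighbors $j>i$ of the current vertex $i$ (via a perfect matching in an edge-deleted availability graph) before setting $\om_i=\om'_i$. In the example above, the solution is to first move $v_2$ to some derangement like $(1\,3)(2\,4)$ that clears the conflict, then set $v_1=(1\,2)$, then set $v_2=(3\,4)$ --- along the way the total Cayley disagreement increases before it decreases, so no natural single-vertex potential can certify all these moves as progress. Two further local issues in your sketch: the edge $(i,\om'_u(i))$ that you want to force into a matching of $H_u(\om)$ need not even be present when $\om$ and $\om'$ disagree on $N(u)$, and pinning a point of agreement does not in general reduce Cayley distance (that distance is $q$ minus the cycle count, not the number of disagreements). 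Your use of Lemma~\ref{l:hall} and the observation that deleting a forced vertex pair leaves minimum degree $\ge q/2$ when $q\ge 2\De+2$ is exactly the right ingredient, but it has to be deployed inside a global scheme, like the paper's sequential repacking, that tolerates temporary increases in disagreement.
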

\begin{proof}
Because there are self-transitions at every state, 
    it suffices to show that the (finite) state space is connected. 

    Every state $\om\in\Om$ can be connected to an arbitrary $\om'\in\Om$ as follows. 
    Label the vertices of $G$ with the integers $1,\dotsc, n$ arbitrarily, and for $i=1,\dotsc,n$, sequentially turn $\om_i$ into $\om_i'$ as follows.
    If setting $\om_i$ equal to $\om'_i$ is not possible (i.e.\ $\om'_i$ is not available for $i$ in $\om$) then it's because some neighbor $j$ of $i$ with $j>i$ uses a color at a particular packing index which conflicts with $\om_i$. 
    To solution is to repack each such neighbor $j$ in $\om$, i.e.\ change $\om_j$ to a new permutation of $L(j)$, in turn such that the repacking is proper \emph{and} avoids any such conflicts. 
    These repacking steps are steps of the chain.

    To perform the repacking, we find a perfect matching in a suitable modification of the availability graph $H_j'=H_j(G,L,\om')$. 
    Recall that perfect matchings in $H_j'$ correspond to the available permutations $\rho$ for $u$ in $\om'$. 
    We have an additional condition on the permutation $\rho$ that we seek, namely that $\rho$ does not correspond to a color choice for $j$ that is incompatible with turning $\om_i$ into $\om_i'$.
    We can encode this in the availability graph by deleting an edge $(a,b)$ such that the $b$-th color in $L(j)$ is used at packing  index $a$ in color choices represented by $\om'_i$. 
    After this modification, the availability graph has minimum degree $q-\Delta-1$, so the condition $q\ge 2\Delta+2$ allows for an application of Lemma~\ref{l:hall}. 
    This shows that the necessary repackings exist, and thus that $\Omega$ is connected by transitions of the chain that occur with positive probability.
\end{proof}

\begin{thm}\label{t:mixingtime}
    The mixing time $\tmix(\ep)$ of $\cM$ is at most $O(n\log(n/\ep))$.
\end{thm}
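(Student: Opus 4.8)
The plan is to deduce Theorem~\ref{t:mixingtime} from the path coupling theorem (Theorem~\ref{t:pathcoupling}) applied to $\cM$, using the weighted directed graph $\Gamma$ on $\Om$ introduced in Section~\ref{s:overview}: $(\om,\om')$ is an edge exactly when the two $L$-packings differ at a single vertex $v$, and that edge carries weight $\dc(\om_v,\om'_v)$. First I would check the hypotheses of Theorem~\ref{t:pathcoupling}. Ergodicity is Lemma~\ref{l:ergodic}, which applies since $q\ge C\Delta^2\ge 2\Delta+2$. For the finite diameter $D$: the connectivity argument in the proof of Lemma~\ref{l:ergodic} transforms any packing into any other by single-vertex repackings in which each of the $n$ vertices is processed once, triggering at most $\Delta$ repackings of neighbours, so at most $n(\Delta+1)$ edges of $\Gamma$ are traversed; since every edge has weight at most $q-1$ (the diameter of $(S_q,\dc)$), we may take $D\le n(\Delta+1)(q-1)$, which is $O(n)$ for fixed $q,\Delta$.

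Next I would define the coupling on an edge $(\om,\om')\in E(\Gamma)$, writing $v$ for the differing vertex and $\psi=\dc(\om_v,\om'_v)$ for the edge weight. Pick $u\in V(G)$ uniformly. If $u\notin N(v)\cup\{v\}$ then the availability graphs of $u$ in $\om$ and $\om'$ coincide (the closed neighbourhood of $u$ carries identical packings), and likewise if $u=v$ the availability graph $H_v$ is the same in $\om$ and $\om'$; in both cases we let the two chains choose the same uniformly random available permutation, so $\sigma=\sigma'$ when $u=v$ and $\delta(\sigma,\sigma')=\psi$ when $u\notin N(v)\cup\{v\}$. If instead $u\in N(v)$, we apply Lemma~\ref{l:couple-unif-pms} (valid since $q\ge C\Delta^2$, enlarging $C$ if needed) to couple the uniform distributions on perfect matchings of the two availability graphs of $u$, obtaining permutations $\rho,\rho'$ with $\E_\gamma[\dc(\rho,\rho')]\le\psi/(2\Delta)$, and we update $u$ to $\rho$ in the first chain and to $\rho'$ in the second.

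The crux is this last case, where $\sigma$ and $\sigma'$ now differ at the two \emph{adjacent} vertices $u$ (with permutations $\rho,\rho'$) and $v$ (with permutations $\om_v,\om'_v$), and I expect bounding $\delta(\sigma,\sigma')$ here to be the main obstacle. What is required is $\delta(\sigma,\sigma')\le\psi+\dc(\rho,\rho')$; equivalently, on pairs of packings differing at two adjacent vertices the path metric $\delta$ should behave like the $\ell_1$-sum of the two coordinatewise Cayley distances. As $\Gamma$ is not geodesic in general, I would establish this using the specific structure at hand — that $\om,\om'$ agree off $v$, and that $\rho,\rho'$ are available for $u$ in $\om,\om'$ respectively — by exhibiting a path of valid $L$-packings from $\sigma$ to $\sigma'$ of total weight $\psi+\dc(\rho,\rho')$: first repack $u$ toward $\rho'$ and then repack $v$ toward $\om'_v$, interleaving the two whenever an intermediate packing would otherwise become improper. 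The hypothesis $q\gg\Delta$ enters here through the minimum degree $q-\Delta$ of availability graphs: their matching sets are rich enough to supply the required intermediate permutations, and one would choose the coupling of Lemma~\ref{l:couple-unif-pms} so that on the event $\rho\neq\rho'$ the permutation $\rho'$ lies among the matchings common to \emph{both} availability graphs of $u$, which makes the two-step path $\sigma\to(\rho',\om_v)\to\sigma'$ directly available. The factor-$2$ slack in Lemma~\ref{l:couple-unif-pms} leaves exactly enough room in the arithmetic below.

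Granting $\delta(\sigma,\sigma')\le\psi+\dc(\rho,\rho')$, the contraction estimate is routine. With $d=\deg_G(v)\le\Delta$,
\begin{align*}
\E_\gamma[\delta(\sigma,\sigma')]
&\le \frac1n\cdot 0+\sum_{u\in N(v)}\frac1n\big(\psi+\E_\gamma[\dc(\rho,\rho')]\big)+\frac{n-d-1}{n}\,\psi\\
&\le \psi\Big(1-\frac1n\Big)+\frac{d}{n}\cdot\frac{\psi}{2\Delta}
\le \psi\Big(1-\frac1{2n}\Big).
\end{align*}
So Theorem~\ref{t:pathcoupling} applies with $\beta=1-\tfrac1{2n}<1$, giving $\tmix(\ep)\le\log(D/\ep)/(1-\beta)\le 2n\log\!\big(n(\Delta+1)(q-1)/\ep\big)=O(n\log(n/\ep))$, as claimed.
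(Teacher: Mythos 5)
Your proposal follows the paper's proof structure closely in all the routine parts: path coupling on the weighted directed graph $\Gamma$, the diameter bound $D\le n(\Delta+1)(q-1)$ via the connectivity argument in Lemma~\ref{l:ergodic}, the coupling that picks a common uniform vertex $u$ and invokes Lemma~\ref{l:couple-unif-pms} when $u\in N(v)$, and the final contraction arithmetic. The genuine contribution of your write-up is that you isolate the step the paper applies silently: when $u\in N(v)$ and the coupled updates return $\rho\ne\rho'$, the pair $(\sigma,\sigma')$ differs at \emph{two} adjacent vertices and is not an edge of $\Gamma$, so the bound $\de(\sigma,\sigma')\le\psi+\dc(\rho,\rho')$ requires a path of that weight through \emph{valid} $L$-packings; the natural two-step intermediate (replace $\rho$ by $\rho'$ at $u$ while keeping $\om_v$ at $v$) need not be a valid packing because $\rho'$ is a perfect matching of $H'$ but not necessarily of $H$. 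You are right that this is the crux, and the paper's proof simply asserts the inequality without justification.

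That said, your proposal does not actually close this gap---you argue only ``granting'' the bound, and the two remedies you sketch are not carried out. Choosing the coupling so that $\rho'$ is a perfect matching of both $H$ and $H'$ whenever $\rho\ne\rho'$ is \emph{not} a property supplied by Lemma~\ref{l:couple-unif-pms} or Lemma~\ref{l:couple-dist} as proved; one would have to rebuild the auxiliary bipartite graph in those lemmas with that extra constraint, and it is not clear this can be done while preserving the $O(\Delta/q)$ expected distance. Routing through a perfect matching $\tau$ of $H\cap H'$ gives a valid three-step path of weight $\psi+\dc(\rho,\tau)+\dc(\tau,\rho')$, but then one must control the overhead $\dc(\rho,\tau)+\dc(\tau,\rho')-\dc(\rho,\rho')$ in terms of $\dc(\rho,\rho')$ rather than $\psi$, since an additive $O(\psi)$ error gets multiplied by $\Delta/n$ and swamps the $-\psi/n$ gain (unless one also argues that $\Pr[\rho\ne\rho']$ is suitably small). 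Alternatively, one could enlarge $\Gamma$ and $\de$ to the near-valid configurations in $S_q^n$ already used in the proof of Lemma~\ref{l:couple-unif-pms}, but then Theorem~\ref{t:pathcoupling} as stated (which requires $\Gamma$ to live on the chain's state space) no longer applies directly. In short: you have correctly located the missing step, but your proposal, like the paper's own proof, leaves it as an unverified assertion that still needs a real argument.
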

\begin{proof}
    We apply Theorem~\ref{t:pathcoupling} with the following coupling defined on edges of the weighted graph $\Ga$ on $\Om$ such that $(\om,\om')$ is an edge of weight $\dc(\om_v,\om_v')$ whenever $\om$ and $\om'$ differ at a single vertex $v$.
    Lemma~\ref{l:ergodic} shows that $\Ga$ is connected, and we note that the diameter $D$ is at most $(\Delta+1)(q-1)n=O(n)$.
    This is because the sequence of steps constructed in Lemma~\ref{l:ergodic} consists of edges of $\Ga$, the total number of steps is at most $(\Delta+1)n$ because we repack each vertex $v$ at most once for each neighbor of $v$ to avoid conflict and at most once more to agree with $\om'$. As the Cayley distance on $S_q$ takes values in $\{0,1,\dotsc,q-1\}$, the diameter bound follows. 
    
    Let $(\om,\om')$ be an edge of $\Ga$ of weight $\psi$. We define the coupling as follows. We choose $u\in V(G)$ uniformly at random and update $u$ in both packings. If $u\notin N(v)$ then the sets of available permutations of $L(u)$ in both packings are identical and we choose one uniformly at random to use in both chains. If $u=v$ this results in a distance of zero, else the distance is unchanged. 
    If $u\in N(v)$ then we use the coupling of Lemma~\ref{l:couple-unif-pms} to choose the permutations of $L(u)$ in the packings.
    Let $(\si,\si')$ be the random state after one step of the coupling started from $(\om,\om')$. Then
\[ \E[\de(\si,\si')] \le \psi + \frac{1}{n}\left(-\psi + \De\frac{\psi}{2\De}\right) = \psi\left(1-\rc{2n}\right).\]
Theorem~\ref{t:pathcoupling} now gives mixing time $\tmix(\ep)=O(n\log(n/\ep))$.
\end{proof}

The proof of Theorem~\ref{t:main-count} using Theorem~\ref{t:mixingtime} is now entirely standard. 
We sketch the argument here. 
Counting list packings is a \emph{self-reducible} problem in the sense of~\cite{JVV86} and so having an almost-uniform sampler, which follows from running the Markov chain for polynomially many steps, is equivalent to having an FPRAS. 
Concretely, construct a sequence $G=G_m\supset \dotsb \supset G_1 \supset G_0 = (V(G),\emptyset)$ of graphs by starting from $G$ and removing an arbitrary edge to form the next member of the sequence. 
For a fixed $q$-list assignment $L$ of $G$ we can let $\Om_i$ be the set of $L$-packings of $G_i$ and write
\[ |\Om_m| = |\Om_0|\prod_{i=1}^{m}\fc{|\Om_{i}|}{|\Om_{i-1}|}. \]
We have $|\Om_0|=(q!)^n$ and can estimate each ratio in the product as it's the probability that when we choose a unformly random $\om\in \Om_{i-1}$ we have $\om\in\Om_i$. 
We also note that $\fc{1}{1+q!} \le \fc{|\Om_{i}|}{|\Om_{i-1}|}\le 1$
because $\Om_i\subset \Om_{i-1}$ and we can construct the following bipartite graph $B$ on $(\Om_{i-1}\setminus \Om_i)\sqcup \Om_i$. Let $uv$ be the edge removed from $G_i$ to form $G_{i-1}$, and include the edge $(\om,\om')$ in $B$ if $\om'$ can be obtained from $\Om$ by permuting the list of $u$. Then any $\om\in\Om_{i-1}\setminus \Om_i$ is connected to at least one element in $\Om_i$ by Lemma~\ref{l:hall}, and there are at most $q!$ ways to permute $L(u)$ so from the other side the degrees are at most $q!$. 
We do not attempt to optimize this argument; more intricate arguments yield stronger lower bounds, but we are merely interested in a bound independent of $n$.
This observation lets us repeat the analysis of Jerrum~\cite{Jer95} for the case of colorings in the setting of list packings. 
Briefly, we use multiple copies of the almost uniform sampler offered by the Markov chain to estimate each ratio. 
To make this work, one has to bound the variance of the estimator for each ratio and manage the overall error with Chebyshev's inequality, but this is standard. See e.g.,~\cite{Jer95}.

\section{Coupling matchings}\label{s:couple-matchings}

In this section we prove Lemmas~\ref{l:couple-dist} and~\ref{l:couple-unif-pms}.
We first collect some results on auxiliary bipartite graphs.

\begin{lem}\label{l:couple}
    Let $G=(V,E)$ be a bipartite graph with bipartition $V=L\cup R$. Let $\mu_L$ and $\mu_R$ be probability distributions over $L$ and $R$.
    Let 
    \[
        p = \min_{A\subeq L} [1 - \mu_L(A) + \mu_R(N(A))].
    \]
    There exists a coupling $\ga$ of $\mu_L$ and $\mu_R$ such that when $(v,w)\sim\ga$, with probability $p$ $vw$ is an edge of $G$.
\end{lem}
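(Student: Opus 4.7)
The plan is to recognize the claim as a maximum transportation problem and prove it via max-flow/min-cut duality. Any coupling $\gamma$ of $\mu_L$ and $\mu_R$ is a nonnegative measure on $L \times R$ with row sums $\mu_L$ and column sums $\mu_R$, and
\[
\Pr_{(v,w)\sim\gamma}[vw \in E] \;=\; \sum_{(v,w)\in E}\gamma(v,w),
\]
so it suffices to exhibit a coupling whose mass on $E$ is at least $p$.

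First I would build an $s$--$t$ network on $\{s\} \cup L \cup R \cup \{t\}$ with arcs $s \to v$ of capacity $\mu_L(v)$ for each $v \in L$, arcs $w \to t$ of capacity $\mu_R(w)$ for each $w \in R$, and arcs $v \to w$ of capacity $+\infty$ for each $vw \in E$. Feasible $s$--$t$ flows correspond precisely to nonnegative functions on $E$ whose row and column sums are dominated by $\mu_L$ and $\mu_R$, so the max flow equals the largest attainable ``edge-supported'' mass of any partial coupling. Second, I would identify the min cut: because the internal arcs have infinite capacity, a finite cut is determined by choosing a set $A \subseteq L$ to keep on the source side, which forces $N(A)$ onto the source side too in order to avoid cutting an infinite-capacity arc. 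The resulting cut has capacity $\mu_L(L \setminus A) + \mu_R(N(A)) = 1 - \mu_L(A) + \mu_R(N(A))$, and minimizing over $A$ yields exactly $p$. Max-flow/min-cut then furnishes a nonnegative function $\gamma_0$ on $E$ of total mass $p$ with row sums $\le \mu_L$ and column sums $\le \mu_R$.

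Finally I would lift $\gamma_0$ to a genuine coupling by filling in the residual mass off $E$. Set
\[
\tilde\mu_L(v) \;=\; \mu_L(v) - \sum_{w}\gamma_0(v,w), \qquad \tilde\mu_R(w) \;=\; \mu_R(w) - \sum_{v}\gamma_0(v,w),
\]
both nonnegative measures of total mass $1-p$, and define
\[
\gamma \;=\; \gamma_0 \;+\; \frac{1}{1-p}\,\tilde\mu_L \otimes \tilde\mu_R,
\]
where the second term is interpreted as zero if $p = 1$. Then $\gamma$ is a probability measure on $L \times R$ with marginals $\mu_L$ and $\mu_R$, and $\Pr_{(v,w)\sim\gamma}[vw \in E] \ge p$ as required. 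I do not foresee a serious obstacle: the only minor technicality is the infinite-capacity convention in the flow network, which one can avoid by replacing $+\infty$ on the arc $v \to w$ with any finite value at least $\min(\mu_L(v),\mu_R(w))$, since no optimum flow ever exceeds that on a single arc anyway. The rest is bookkeeping.
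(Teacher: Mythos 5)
Your proof takes essentially the same approach as the paper: both construct the same $s$--$t$ flow network (source arcs with capacities $\mu_L$, sink arcs with capacities $\mu_R$, infinite-capacity internal arcs on $E$) and invoke max-flow/min-cut to show the optimal edge-supported mass equals $p$. You are somewhat more explicit than the paper in one helpful respect: you actually carry out the step of extending the partial transport plan $\gamma_0$ of mass $p$ to a genuine coupling via the product-measure completion $\gamma_0 + \tfrac{1}{1-p}\,\tilde\mu_L\otimes\tilde\mu_R$, whereas the paper leaves the ``a max flow corresponds to a coupling'' step implicit. That detail is correct and worth spelling out.
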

\begin{proof}
This follows from the max-flow min-cut theorem after introducing a source vertex $s$, a sink vertex $t$, and introducing the following edges:
\begin{enumerate}
    \item for $v\in L$, the edge $sv$ with capacity $\mu_L(v)$,
    \item for $v\in R$, the edge $vt$ with capacity $\mu_R(v)$,
    \item for each edge $e\in G$, an edge with capacity $\iy$.
\end{enumerate}
A max flow corresponds to a coupling $\ga$ of the type we require with maximum probability that $vw$ is an edge. 
The value of the max flow is the same as the value of the min cut. 
If $S,T$ is a finite cut with $s\in S$, $t\in T$, letting $A=S\cap L$, $B = S\cap R$, we must have $N(A)\subeq B$, and the value is $\mu_L(A^c)+\mu_R(B)\ge \mu_L(A^c)+\mu_R(N(A))$. Equality is achieved for $B=N(A)$. Taking the minimum over $A$ then gives the lemma.
\end{proof}

\begin{cor}\label{c:couple}
    Let $G=(V,E)$ be a bipartite graph with bipartition $V=L\cup R$. Suppose that each vertex in $L$ has degree contained in $[m_L,M_L]$ and each vertex in $R$ has degree contained in $[m_R,M_R]$. Then there exists a coupling $\ga$ of the uniform distributions $\mathcal{U}_L$ and $\mathcal{U}_R$ on $L$ and $R$ respectively such that when $(v,w)\sim\ga$, with probability at least $\fc{m_Lm_R}{M_LM_R}$, $vw$ is an edge of $G$.
\end{cor}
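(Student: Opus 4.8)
The plan is to apply Lemma~\ref{l:couple} directly with $\mu_L=\mathcal{U}_L$ and $\mu_R=\mathcal{U}_R$. Writing $n_L=|L|$ and $n_R=|R|$, the quantity $p$ of that lemma becomes
\[
p=\min_{A\subseteq L}\left[1-\frac{|A|}{n_L}+\frac{|N(A)|}{n_R}\right],
\]
so the whole task reduces to showing $p\ge \fc{m_Lm_R}{M_LM_R}$; the coupling furnished by Lemma~\ref{l:couple} is then a coupling satisfying the claimed bound.

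Two easy double-counting estimates do the work. First, I would fix $A\subseteq L$ and count the edges of $G$ incident to $A$ (all of which land in $N(A)$): from the $L$-side this count is $\sum_{v\in A}\deg(v)\ge m_L|A|$, while from the $R$-side it is at most $\sum_{w\in N(A)}\deg(w)\le M_R|N(A)|$, giving the vertex-expansion bound $|N(A)|\ge \fc{m_L}{M_R}|A|$. Second, counting all edges of $G$ from each side gives $m_Rn_R\le |E|\le M_Ln_L$, hence $\fc{n_L}{n_R}\ge \fc{m_R}{M_L}$.

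Now I would combine these: for every $A\subseteq L$ with $|A|=a$,
\[
1-\frac{a}{n_L}+\frac{|N(A)|}{n_R}\ \ge\ 1-\frac{a}{n_L}+\frac{m_L}{M_R}\cdot\frac{a}{n_R},
\]
and the right-hand side is affine in $a$ over $[0,n_L]$, so it is at least the smaller of its two endpoint values. At $a=0$ the value is $1\ge \fc{m_Lm_R}{M_LM_R}$ (using $m_L\le M_L$ and $m_R\le M_R$); at $a=n_L$ the value is $\fc{m_Ln_L}{M_Rn_R}\ge \fc{m_Lm_R}{M_LM_R}$ by the second estimate. Therefore $p\ge \fc{m_Lm_R}{M_LM_R}$, which is what is needed.

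I do not expect a genuinely hard step here; the only mild subtlety is that $|N(A)|$ is not linear in $|A|$, and this is dealt with by replacing $|N(A)|$ with the linear lower bound coming from the expansion estimate, after which the minimization is over an affine function and can be read off at the endpoints. (If $m_L=0$ or $m_R=0$ the asserted bound is $0$ and the statement is vacuous, so one may assume all degrees are positive and in particular $L,R\ne\emptyset$.)
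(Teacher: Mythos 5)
Your proof is correct and essentially the same as the paper's: both apply Lemma~\ref{l:couple} with the uniform distributions and establish the lower bound on $p$ via the same two double-counting estimates ($m_L|A|\le|E(A,N(A))|\le M_R|N(A)|$ and $m_R|R|\le|E|\le M_L|L|$). The paper combines the two estimates up front to get $\mathcal{U}_R(N(A))\ge\frac{m_Lm_R}{M_LM_R}\mathcal{U}_L(A)$ and then drops to the worst case $\mathcal{U}_L(A)=1$, whereas you substitute the first estimate, observe the resulting bound is affine in $|A|$, and check the two endpoints; this is the same minimization, just organized a little differently.
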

\begin{proof}
    For each subset $A\subeq L$, we can bound the number of edges between $A$ and $N(A)$:
    \[
    m_L |A|\le 
    |E(A,N(A))| \le M_R |N(A)|.
    \]
    We also have
    \[
    m_R |R| \le |E| \le M_L |L|.
    \]
    Then for any set $A\subeq L$, 
    \[
\fc{|N(A)|/|R|}{|A|/|L|} = \fc{|N(A)|}{|A|}
\cdot \fc{|L|}{|R|} \ge \fc{m_Lm_R}{M_LM_R}.
    \]
    Then for any such $A$,
    \[
1-\mathcal{U}_L(A) + \mathcal{U}_R(N(A)) 
\ge 1- \mathcal{U}_L(A) + \fc{m_Lm_R}{M_LM_R}\cdot \mathcal{U}_L(A)
\ge \fc{m_Lm_R}{M_LM_R}.
    \]
    The conclusion follows from Lemma \ref{l:couple}.
\end{proof}

We are now ready to prove Lemma~\ref{l:couple-dist}, which we restate for convenience.
\lcoupledist*

\begin{proof}%
Label the bipartitions by $[q]$. 
    Without loss of generality the edge $e$ is $(1,1)$. 
    We define an auxiliary bipartite graph on $\mathcal L\cup \mathcal R$ as follows. 
    For $\rh\in \mathcal L$ and $\rh'\in \mathcal R$, connect $\rh$ and $\rh'$ by an edge if $\rh (1\,i\,j) = \rh'$ for some $i,j\in [q]\bs\{1\}$ with $i\ne j$, i.e., they differ by a 3-cycle containing 1.

    We bound the degrees of arbitrary matchings $\rh\in \mathcal L$ and $\rh'\in \mathcal R$. 
    Given $\rh$, we count the number of $i,j$ for which $\rh(1\,i\,j)\in \mathcal R$. 
    Let $N(i) = \set{j}{(i,j)\in E}$ and $N'(j) = \set{i}{(i,j)\in E}$. 
    Given $\rh\in L$, we know that $\rh(1)=1$.
    We choose $j \in N'(1) \bs \{1\}$; there are at least $q-\De -1$ choices. 
    Any possible value of $i$ must be in the set $S_-\bs \{j\}$, where 
    $S_-:= \set{\rh^{-1}(k)}{k\in N(1)\bs \{1\}}$; $S_-\bs \{j\}$ has size at least $q-\De-2$. 
    A valid pair $(i,j)$ is exactly one where $j\in N'(1)\bs \{1\}$, $i\in S_-\bs \{j\}$, and $(i,\rh(j))\in E$.
    Since at most $\De+1$ of edges $(i,\rh(j))$, $j\in N'(1)\bs \{1\}$ can land outside $E$, at least $q-2\De-3$ of these edges are valid, i.e., there are at least $(q-\De-1)(q-2\De -3)$ valid choices of $(i,j)$. There are at most $(q-1)(q-2)$ choices.

    Next, given $\rh'\in\mathcal R$, we count the number of $i,j\in [q]\bs \{1\}$, $i\ne j$ for which $\rh'=\rh(1\,i\,j)$ where $\rh\in\mathcal L$. First, note we must have $\rh'(j)=1$. 
    The requirement on $i$ is that $i\in N'(\rh'(1))$ and $\rh'(i)\in N(j)$. There are at least $q-\De-2$ indices besides $1$ and $j$ satisfying each condition, so at least $q-2\De-4$ possible indices. There are at most $q-2$ choices. By Corollary~\ref{c:couple}, there is a coupling $\ga$ of $\mathcal{U}_{\mathcal L}$ and $\mathcal{U}_{\mathcal R}$ such that with probability at least
    \[
p:=\fc{(q-\De -1)(q-2\De-3)}{(q-1)(q-2)} \cdot \fc{q-2\De-4}{q-2},
    \]
    $(\rh,\rh')\in E$ are connected by an edge and hence have Cayley distance at most two. 
    For appropriate choices of $C_1,K>0$, $1-p\le {K\De}/{q}$ for $q\ge C_1\De$. 
    For this coupling $\ga$, we hence have
    \[
    \E_{(\rh,\rh')\sim \ga} [\dc(\rh,\rh')]
    \le \fc{K\De}{q}\cdot q + \pa{1-\fc{K\De}{q}} \cdot 2\le K'\De
    \]
    for an appropriate constant $K'>0$.
    
    Next, note that we can define a coupling between $\mathcal{U}_{\mathcal L\cup \mathcal R}$ and $\mathcal{U}_{\mathcal R}$ as a mixture of the identity coupling between $\mathcal{U}_{\mathcal R}$ and $\mathcal{U}_{\mathcal R}$ (with probability $\fc{|\mathcal R|}{|\mathcal L|+|\mathcal R|}$) and the above coupling (with probability $\fc{| \mathcal L|}{|\mathcal L|+|\mathcal R|}$). The expected distance for this coupling is then 
    \[
    \fc{|\mathcal L|}{|\mathcal L|+|\mathcal R|}\cdot 
    K'\De 
    \le \fc{q-2}{(q-2)+(q-\De-1)(q-2\De-3)} \cdot 
    K'\De \le \fc{C_2\De}{q}
    \]
    for appropriate $C_2>0$, as needed.
\end{proof}

\lcoupleunifpms*

\begin{proof}%

    Formally, we proceed by induction on $\psi$, though we need a more general statement for the induction hypothesis. It is convenient to relax the requirement that $\om$ and $\om'$ are valid list packings. 
    For the application, it is important that the pair $(\om,\om')$ are valid list packings which agree on every vertex except $u$, but we construct the coupling of perfect matchings in $H$ and $H'$ using a sequence of \emph{near-valid} list packings $\om''\in S_q^n$ in the sense that $\om''$ agrees with $\om$ and $\om'$ on all vertices except $u$, but we allow the (pairwise-disjoint) colorings it represents to have monochromatic edges incident to $v$. 
    We can still construct availability graphs for $u$ in these near-valid packings and consider their sets of perfect matchings for the purposes of constructing an eventual coupling of the perfect matchings in $H$ and $H'$.
    With these definitions in place, the generalization that we prove by induction is the statement obtained by replacing the assumption that $\om$ and $\om'$ are valid packings with the assumption that they are near-valid.
    
    The base case is $\psi=0$ in which the trivial coupling suffices as $H$ and $H'$, and hence their sets of perfect matchings, are identical. 

    The induction step follows from Lemma~\ref{l:couple-dist}. 
    The fact that $\dc(\om_v,\om'_v)=\psi$ means that there is a sequence of transpositions $\tau_1,\dotsc,\tau_\psi$ such that $\om_v=\tau_\psi\dotsb \tau_1\om'_v$.
    Let $H''$ be the availability graph of the vertex $u$ in the near-valid packing $\om''$ such that $\om''_v = \tau_\psi\om_v$ and $\om''$ agrees with $\om$ on all other vertices.    
    By induction, there is a coupling $\ga'$ of the uniform distributions on perfect matchings in $H''$ and $H'$ such that 
    \[ \E_{(\rh'',\rh')\sim\ga'}[\dc(\rh'',\rh')] \le \fc{\psi-1}{2\De}. \]
    
    Without loss of generality, suppose that $\om_v$ is the identity. 
    Let $\tau_\psi=(i\,j)$, and note that this gives 
    \begin{align*}
    E(H)\setminus E(H'') &\subset \{(i,j), (j,i)\}&
    &\text{and}&
    E(H'')\setminus E(H) &\subset \{(i,i), (j,j)\}.&
    \end{align*}
    This is because any difference between the edges of $H$ and $H'$ is explained by applying $\tau$ to $\om_v$. When $\tau$ is a transposition, we swap the packing index of the coloring at which two colors in the list of $v$ are used, which can swap two edges of the complement of the availability graph. 
    It can be the case that the swapped color indices refer to different colors in $L(u)$ which is why we do not have equality.
    
    We start with the case that 
    \begin{align*}
    E(H)\setminus E(H'') &= \{(i,j), (j,i)\}&
    &\text{and}&
    E(H'')\setminus E(H) &= \{(i,i), (j,j)\},&
    \end{align*}
    the other cases are similar.
    Let $X$ be the set of perfect matchings in $H$ and let $X''$ be the set of perfect matchings in $H''$. We seek a coupling of $\mathcal{U}_{X}$ and $\mathcal{U}_{X''}$ which we will combine with the coupling $\ga'$ to obtain the desired result. 
    
    We apply Lemma~\ref{l:couple-dist} to $H$ and $H-(i,j)$, yielding a coupling $\ga_1$ of $\mathcal{U}_{X}$ and $\mathcal{U}_{Y}$, where $Y$ is the set of perfect matchings in $H-(i,j)$.
    We can apply Lemma~\ref{l:couple-dist} again to $H-(i,j)$ and $H-(i,j)-(j,i)$, yielding a coupling $\ga_2$ of $\mathcal{U}_{Y}$ and $\mathcal{U}_{Z}$, where $Z$ is the set of perfect matchings in $H-(i,j)-(j,i)$.
    There is a slight technicality here as the minimum degree of $H-(i,j)$ is $q-\Delta-1$, but this can be handled by setting $\Delta$ to $\Delta+1$ and adjusting the constants slightly.
    Analogously, starting from $H''$, we construct a coupling $\ga_1''$ of $\mathcal{U}_{X''}$ and $\mathcal{U}_{Y''}$ and a coupling $\ga_2''$ of $\mathcal{U}_{Y''}$ and $\mathcal{U}_{Z}$, where $Y''$ is the set of perfect matchings in $H''-(i,i)$. 
    A careful composition of these couplings gives the result. 
    The composition of these couplings yields a distribution on $X\times Y\times Z\times Y''\times X''$ which is uniform on each individual set in the Cartesian product, and such that the expected distance between permutations from adjacent sets in the Cartesian product is at most $C_2\Delta/q$. Taking the first and last coordinate yields a coupling $\ga''$ of $\mathcal{U}_X$ and $\mathcal{U}_{X''}$ such that
    \[
    \E_{(\pi,\pi'')\sim \ga''} [\dc(\pi,\pi'')] <\fc{4C_2\Delta}{q}.
    \]
    This can be combined with $\ga'$ obtained by induction in the same way. Simple composition yields a distribution on $X\times X''\times X'$ which is uniform on each individual set in the Cartesian product. 
    Taking the first and last coordinates we have a coupling $\ga$ of $\mathcal{U}_X$ and $\mathcal{U}_{X'}$ such that 
    \[
    \E_{(\pi,\pi')\sim \ga} [\dc(\pi,\pi')] < \fc{\psi-1}{2\De} + \fc{4C_2\De}{q}.
    \]
    Since we assume $q\ge C\Delta^2$, for a large enough $C$ this is at most $\psi/(2\Delta)$ as required.
    The other cases proceed similarly, but require fewer applications of Lemma~\ref{l:couple-dist} and yield a stronger upper bound.
\end{proof}

\section{Concluding remarks}\label{s:rem}

Many natural questions remain unanswered. We have chosen to extend some of the simplest and earliest techniques for list coloring to list packing, but there are many more recent improvements to consider. 
Dyer and Greenhill~\cite{DG98} study a Markov chain on (list) colorings whose transitions are defined by properly recoloring both endpoints of a uniform random edge and show that its mixing time is less than that of Glauber dynamics studied in~\cite{Jer95,SS97}.
The flip dynamics employed by Vigoda~\cite{Vig99} for counting colorings is an important technique and while one can consider analogous dynamics for list packings, the analysis is potentially formidable. 
Further, the use of more advanced Markov chain techniques to give perfect sampling could be interesting in the setting of list packing.

We finish with a natural conjecture on approximately counting list packings.

\begin{conj}
    For each $\De$ and $q\ge 2\Delta$ there is an FPRAS for counting the number of $q$-list packings of graphs of maximum degree $\De$.
\end{conj}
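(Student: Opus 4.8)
The plan is to run exactly the strategy of Theorem~\ref{t:main-count} — heat-bath Glauber dynamics on $L$-packings analysed by path coupling (Theorem~\ref{t:pathcoupling}) — but with an essentially optimal version of the matching-coupling estimate. Tracing constants through the proof of Theorem~\ref{t:mixingtime}, path coupling on the graph $\Ga$ is contracting as soon as, conditioned on updating a neighbour $u$ of the unique differing vertex $v$, the expected distance increase is below $\psi$; the computation there shows this holds whenever Lemma~\ref{l:couple-unif-pms} can be proved with any bound of the form $(1-\eta)\psi/\De$ in place of $\psi/(2\De)$. Unwinding the induction in Lemma~\ref{l:couple-unif-pms}, such a bound requires the per-transposition defect supplied by Lemma~\ref{l:couple-dist} to be $O(1/q)$ with a genuinely small constant — that is, the spurious factor $\De$ in the present statement of Lemma~\ref{l:couple-dist} must be eliminated, $C_2$ brought down to order $1$, and the number of couplings chained per induction step kept small, so that $q\ge 2\De$ is enough.

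Thus the first and central step is an improved matching lemma: for a balanced bipartite graph $H$ on $2q$ vertices with minimum degree $q-\De$ and a distinguished edge $e$, construct a coupling of $\mathcal{U}_{\mathcal{R}}$ and $\mathcal{U}_{\mathcal{L}\cup\mathcal{R}}$ with $\E[\dc(\rh,\rh')]\le C/q$, no factor of $\De$. The factor is lost in the current argument because, after conditioning on $\mathcal{L}$, one routes to $\mathcal{R}$ by a random $3$-cycle through the endpoints of $e$, and with probability $\approx\De/q$ no such $3$-cycle is available, at which point the diameter bound $q$ is used, contributing $\approx\De$. I would instead make this step recursive: when the length-$2$ move fails, rather than giving up, expose a short alternating (augmenting-type) path that re-routes $e$ out of the matching and couple each elementary swap along it via Corollary~\ref{c:couple}, exactly as Lemma~\ref{l:couple-unif-pms} already chains couplings. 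The content to establish is that, over a uniform matching in $\mathcal{L}$ of a min-degree-$(q-\De)$ graph, the shortest such re-routing has expected length $O(1)$ (worst case $O(\log q)$), using that all but $O(\De)$ coordinates of the matching behave generically and that the ``bad'' coordinates do not compound. An alternative is a comparison / canonical-paths approach: build a multicommodity flow from $\mathcal{L}$ to $\mathcal{R}$ along bounded-length alternating paths and read the coupling off from congestion, trading combinatorial bookkeeping for a counting estimate on how many matchings use each intermediate edge.

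Given the sharpened Lemma~\ref{l:couple-dist}, the remaining steps are bookkeeping. Redo the induction of Lemma~\ref{l:couple-unif-pms} carrying the true constant (not absorbing it into a ``$4C_2$''), then redo Theorem~\ref{t:mixingtime} to check $\E[\delta(\sigma,\sigma')]\le\psi(1-\Omega(1/n))$ down to $q\ge 2\De$, and invoke the self-reducibility reduction (unchanged) for the FPRAS. The boundary $q=2\De$ may be only weakly contracting, just as for Jerrum's $q\ge 2\De$ colouring bound; this is handled by a Metropolis / two-step variant of the chain or a small additive-slack argument, and is a known nuisance rather than a real obstruction.

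The main obstacle is the improved matching lemma. The current $\psi/(2\De)$ came nearly for free from Corollary~\ref{c:couple}; removing the $\De$ requires actually understanding how fast a uniformly random perfect matching of a near-complete balanced bipartite graph containing a fixed edge $e$ can re-route that edge away, and showing the $O(\De)$ ``missing'' coordinates do not interact badly. Should the $\De$-factor resist, the natural fallback is an FPRAS for $q\ge C\De$ with a suboptimal linear constant — already past what is proved here — while going below $2\De$ would presumably demand flip-type dynamics in the spirit of Vigoda, whose analysis for list packings is, as noted, formidable.
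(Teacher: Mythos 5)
This statement is presented in the paper as an open \emph{conjecture}, not a theorem; the paper offers no proof of it, and your proposal does not supply one either. What you have written is a research program that correctly diagnoses where the existing argument loses a factor of $\De$ (and the paper itself flags this: the remark after Lemma~\ref{l:couple-dist} says removing the $\De$ there would improve Theorem~\ref{t:main-count} by a factor of $\De$), but the key technical ingredient — a version of Lemma~\ref{l:couple-dist} with expected Cayley distance $O(1/q)$ rather than $O(\De/q)$ — is left as a sketch. You concede this yourself (``The content to establish is that\dots'' and ``Should the $\De$-factor resist\dots''); expecting a short alternating re-routing of length $O(1)$ over a uniform matching, with the $O(\De)$ atypical coordinates not compounding, is plausible but is precisely the unresolved difficulty, not a corollary of anything already proved.

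Even granting the improved matching lemma, two further gaps remain. First, unwinding the induction in Lemma~\ref{l:couple-unif-pms} chains up to four couplings per transposition, so an $O(1/q)$ per-step defect yields a requirement of the form $q\ge C\De$ for a constant $C$ tied to those chained couplings; driving $C$ all the way to $2$ is a delicate optimization you gesture at (``kept small'', ``small additive-slack argument'') but do not carry out, and for colorings the analogous boundary $q=2\De$ already required Jerrum's ad hoc handling rather than off-the-shelf path coupling. Second, ergodicity of the heat-bath chain is only established in Lemma~\ref{l:ergodic} for $q\ge 2\De+2$, since the modified availability graph has minimum degree $q-\De-1$ and Hall's condition as used needs $q-\De-1\ge q/2$; your plan does not address how the chain is even connected at $q=2\De$ or $q=2\De+1$. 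In short: your diagnosis of the bottleneck matches the authors' own, but the proposal stops exactly where the hard work begins, and as written it would not establish the conjecture even if the heuristic re-routing estimate were taken on faith.
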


\noindent
At the time of writing, we know of no reason that the lower bound on $q$ cannot be reduced to, say, $\Delta+1$. The value $2\Delta$ represents a significant barrier in the sense that the existence of a list packing when $q\ge 2\Delta$ is elementary (though arguably not entirely trivial).

\section{Acknowledgement}

Part of this work was completed at Random Theory 2023.

\printbibliography

\end{document}